\numberwithin{equation}{section}
\numberwithin{figure}{section}
  \theoremstyle{plain}
  \newtheorem*{thm*}{\protect\theoremname}
\theoremstyle{plain}
\newtheorem{thm}{\protect\theoremname}
  \theoremstyle{definition}
  \newtheorem{defn}[thm]{\protect\definitionname}
  \theoremstyle{definition}
  \newtheorem{example}[thm]{\protect\examplename}
  \theoremstyle{plain}
  \newtheorem{lem}[thm]{\protect\lemmaname}
  \newcounter{casectr}
  \newenvironment{caseenv}
  {\begin{list}{{\itshape\ \protect\casename} \arabic{casectr}.}{%
   \setlength{\leftmargin}{\labelwidth}
   \addtolength{\leftmargin}{\parskip}
   \setlength{\itemindent}{\listparindent}
   \setlength{\itemsep}{\medskipamount}
   \setlength{\topsep}{\itemsep}}
   \setcounter{casectr}{0}
   \usecounter{casectr}}
  {\end{list}}
  \theoremstyle{remark}
  \newtheorem{rem}[thm]{\protect\remarkname}
  \theoremstyle{plain}
  \newtheorem{prop}[thm]{\protect\propositionname}
  \theoremstyle{plain}
  \newtheorem{cor}[thm]{\protect\corollaryname}
  \providecommand{\casename}{Case}
  \providecommand{\corollaryname}{Corollary}
  \providecommand{\definitionname}{Definition}
  \providecommand{\examplename}{Example}
  \providecommand{\lemmaname}{Lemma}
  \providecommand{\propositionname}{Proposition}
  \providecommand{\remarkname}{Remark}
  \providecommand{\theoremname}{Theorem}
\providecommand{\theoremname}{Theorem}
\begin{document}
\selectlanguage{american}%
\global\long\def\o{\mathbf{1}}

\global\long\def\epsilon{\varepsilon}

\global\long\def\phi{\varphi}

\global\long\def\L{\mathcal{L}}

\global\long\def\Sa{\mathcal{S}_{\alpha}}

\global\long\def\R{\mathbb{R}}

\global\long\def\N{\mathbb{N}}

\global\long\def\T{\mathcal{T}}

\global\long\def\S{\mathcal{S}}

\global\long\def\Kn{\mathcal{K}^{n}}

\global\long\def\Hn{\mathcal{H}^{n}}

\global\long\def\lc{\textrm{LC}\!\left(\R^{n}\right)}

\global\long\def\cvx{\textrm{Cvx}\!\left(\R^{n}\right)}

\global\long\def\cvxo{\textrm{Cvx}_{0}}

\global\long\def\cao{C_{\alpha}^{0}}

\global\long\def\Ca{C_{\alpha}\left(\R^{n}\right)}

\global\long\def\base{\text{base}_{\alpha}}

\global\long\def\Im{\text{Im}}

\selectlanguage{english}%

\title{Support functions and mean width for $\alpha$-concave functions}
\begin{abstract}
In this paper we extend some notions, previously defined for log-concave
functions, to the larger domain of so-called $\alpha$-concave functions.
We begin with a detailed discussion of support functions -- first
for log-concave functions, and then for general $\alpha$-concave
functions. We continue by defining mean width, and proving some basic
results such as an Urysohn type inequality. Finally, we demonstrate
how such geometric results can imply Poincaré type inequalities.
\end{abstract}

\keywords{$\alpha$-concavity, mean width, support function, Urysohn}

\author{Liran Rotem}

\address{School of Mathematical Sciences, Tel Aviv University, Ramat Aviv,
69978, Tel Aviv, Israel}

\email{liranro1@post.tau.ac.il}

\maketitle

\section{\label{sec:support-functions}Support functions and $\alpha$-concave
functions}

A well known construction in classic convexity is the support function
of a convex body. Let $\emptyset\ne K\subseteq\R^{n}$ be a closed,
convex set. Then its support function is a function $h_{K}:\R^{n}\to(-\infty,\infty]$,
defined by 
\[
h_{K}(y)=\sup_{x\in K}\left\langle x,y\right\rangle ,
\]
where $\left\langle \cdot,\cdot\right\rangle $ denotes the standard
Euclidean structure on $\R^{n}$. 

To begin our discussion, we will briefly state a few basic properties
of support functions. A more detailed account, together with all of
the relevant proofs, can be found for example in section 1.7 of \cite{schneider_convex_1993}.
Define 
\[
\Kn=\left\{ \emptyset\ne K\subseteq\R^{n}:\ K\text{ is closed and convex}\right\} .
\]
 For every $K\in\Kn$ its support function $h_{K}$ belongs to 
\[
\Hn=\left\{ \phi:\R^{n}\to(-\infty,\infty]:\ \begin{array}{c}
\phi\text{ is convex, lower semicontinuous and }\\
\text{positively homogenous with }\phi(0)=0
\end{array}\right\} .
\]
 Furthermore, the map $\T:\Kn\to\Hn$ sending $K$ to $h_{K}$ has
the following properties: 
\begin{enumerate}
\item [(P1)]$\mathcal{T}$ is bijective.
\item [(P2)]$\mathcal{T}$ is order preserving: $K_{1}\subseteq K_{2}$
if and only if $\T K_{1}\le\T K_{2}$ (here and after, $f\le g$ means
$f(x)\le g(x)$ for all $x$)
\item [(P3)]$\T$ is additive: For every $K_{1},K_{2}$ we have $\T(K_{1}+K_{2})=\T K_{1}+\T K_{2}$,
where the addition on the left hand side is Minkowski addition:
\[
K_{1}+K_{2}=\left\{ x+y:\ x\in K_{1}\text{ and }y\in K_{2}\right\} .
\]

\end{enumerate}
It turns out that these properties suffice to characterize $\T$ uniquely,
up to a linear change of variables. In fact, one can do even better:
From the work of Gruber in \cite{gruber_endomorphisms_1991} it is
easy to deduce the following:
\begin{thm*}
Assume $\T:\Kn\to\Hn$ satisfies (P1) and (P2) for $n\ge2$. Then
there exists an invertible affine map $B:\R^{n}\to\R^{n}$ such that
$\T\left(K\right)=h_{B(K)}$ for all $K\in\Kn$. 
\end{thm*}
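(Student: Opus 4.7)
The plan is to reduce the statement to a classification of order-isomorphisms of $\Kn$ by precomposing with the inverse of the classical support-function map. Write $\S_{0}:\Kn\to\Hn$ for the assignment $\S_{0}(K)=h_{K}$; the properties (P1)--(P2) recalled above say exactly that $\S_{0}$ is a bijection whose inverse is also order preserving. I would therefore set
\[
\phi := \S_{0}^{-1}\circ\T:\Kn\longrightarrow\Kn,
\]
and observe that it suffices to prove $\phi(K)=B(K)$ for some invertible affine $B:\R^{n}\to\R^{n}$, since then $\T(K)=\S_{0}(B(K))=h_{B(K)}$ is the desired conclusion.

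A short verification gives that $\phi$ is an order automorphism of $(\Kn,\subseteq)$: it is a bijection as a composition of bijections, and both $\phi$ and $\phi^{-1}$ preserve inclusion because each of the two factors does. The central step of the proof is now to invoke the work of Gruber in \cite{gruber_endomorphisms_1991}, which for $n\ge 2$ classifies such order automorphisms of the lattice of closed convex sets as those induced by an affine bijection of $\R^{n}$. Granting this, the theorem follows immediately.

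The hard part -- and what the phrase ``easy to deduce'' hides -- is matching the precise hypotheses of Gruber's results to the class $\Kn$ used here, which permits unbounded closed convex sets. I would proceed order-theoretically: first show that $\phi$ must send singletons to singletons (these are exactly the minimal nonempty elements of $\Kn$), which defines a point map $B:\R^{n}\to\R^{n}$ by $\phi(\{x\})=\{B(x)\}$; next show that $\phi$ preserves the sub-poset of compact convex bodies, which can be detected purely from the inclusion order (for example, as those elements admitting no strictly increasing chain of proper subsets of a given type). Applying Gruber's theorem to this restriction yields that $B$ is affine and that $\phi(K)=B(K)$ for all bounded $K$. Finally, I would extend to unbounded $K\in\Kn$ by writing $K$ as the closure of the increasing union of the bounded sets $K_{j}=K\cap\{|x|\le j\}$; using order preservation of both $\phi$ and $\phi^{-1}$ together with closedness, both $\phi(K)$ and $B(K)$ are forced to equal the smallest element of $\Kn$ containing every $B(K_{j})=\phi(K_{j})$, and coincide. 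This last density argument is the technical point that requires the most care, since no topology on $\Kn$ is available and everything must be phrased in terms of the lattice structure.
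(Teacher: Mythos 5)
The paper offers no proof of this statement---it is merely asserted to be ``easy to deduce'' from Gruber's work---so your reduction $\phi=\S_{0}^{-1}\circ\T$ to the classification of order automorphisms of $(\Kn,\subseteq)$ is exactly the route the author intends, and that part of your argument is correct. Two remarks on the technical discussion that follows, one corrective and one simplifying.

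First, the parenthetical characterization of compact bodies as ``those elements admitting no strictly increasing chain of proper subsets of a given type'' cannot be right as stated: any convex body $K$ with $0$ in its interior contains the strictly increasing chain of dilates $\tfrac{1}{2}K\subsetneq\tfrac{2}{3}K\subsetneq\tfrac{3}{4}K\subsetneq\cdots$, so compact bodies are in no way distinguished by the absence of such chains. Boundedness \emph{is} order-detectable in $(\Kn,\subseteq)$, but by a different test; for instance, one can verify that $K$ is bounded if and only if every chain $\mathcal{C}\subseteq\Kn$ with $\sup\mathcal{C}=\R^{n}$ contains an element $L$ with $K\subseteq L$.

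Second, and more substantively, your final density/limit step is unnecessary, and dropping it also removes the need to detect boundedness at all. Singletons are precisely the minimal elements of $(\Kn,\subseteq)$, so $\phi$ and $\phi^{-1}$ both permute them; write $\phi(\{x\})=\{B(x)\}$ for the resulting bijection $B$ of $\R^{n}$. Then for every $K\in\Kn$ and every $x\in\R^{n}$ we have $x\in K\iff\{x\}\subseteq K\iff\{B(x)\}\subseteq\phi(K)\iff B(x)\in\phi(K)$, so $\phi(K)=B(K)$ holds for \emph{all} $K$ simultaneously, bounded or not, with no limiting argument and no topology on $\Kn$. What remains is only to show that $B$ is affine; this is what Gruber's theorem supplies once you restrict to compact bodies (and now you already know the restriction is $K\mapsto B(K)$), or, if one wishes to avoid the compactness discussion entirely, it follows from the fundamental theorem of affine geometry after observing that the class of at-most-one-dimensional sets is order-definable (those $K$ such that among any three singletons $\{a\},\{b\},\{c\}\le K$ one is below the join of the other two), whence $\phi$ preserves it and $B$ maps collinear triples to collinear triples.
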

Similarly, In \cite{artstein-avidan_characterization_2010} Artstein-Avidan
and Milman prove:
\begin{thm*}
Assume $\T:\Kn\to\Hn$ satisfies (P1) and (P3) for $n\ge1$. Then
there exists an invertible linear map $B:\R^{n}\to\R^{n}$ such that
$\T\left(K\right)=h_{B(K)}$ for all $K\in\Kn$. 
\end{thm*}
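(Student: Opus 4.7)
My plan is to reduce the theorem to a characterization of Minkowski-additive bijections of $\Kn$. Since the classical support map $h\colon\Kn\to\Hn$ is itself a Minkowski-additive bijection, the composition $F:=h^{-1}\circ\T\colon\Kn\to\Kn$ is a Minkowski-additive bijection satisfying $\T(K)=h_{F(K)}$. Thus it suffices to show $F(K)=BK$ for some invertible linear $B\colon\R^{n}\to\R^{n}$; the conclusion $\T(K)=h_{BK}$ then follows.

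First I would extract $B$ from the action of $F$ on singletons. Cancellativity of Minkowski addition together with $K+\{0\}=K$ and additivity of $F$ force $F(\{0\})=\{0\}$. From $\{x\}+\{-x\}=\{0\}$ we obtain $F(\{x\})+F(\{-x\})=\{0\}$, and the elementary observation that two nonempty closed convex sets whose Minkowski sum is a singleton must themselves be singletons shows that $F$ preserves singletons. Setting $F(\{x\})=\{Bx\}$, the additivity of $F$ makes $B\colon\R^{n}\to\R^{n}$ a Cauchy-additive map, which is a bijection by applying the same reasoning to $F^{-1}$.

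The main obstacle is upgrading $B$ from a Cauchy-additive bijection to an $\R$-linear map, since without any order or continuity hypothesis pathological additive bijections (via Hamel bases) are a priori possible. My plan is to exploit the action of $F$ on one-parameter families of parallel segments to force regularity on $B$. For fixed $v\in\R^{n}$ the family $\{[0,\lambda v]\}_{\lambda\ge 0}$ is Minkowski-additive in $\lambda$, so $\lambda\mapsto F([0,\lambda v])$ is also Minkowski-additive. Combining this with $[0,\lambda v]+\{-\lambda v\}=[-\lambda v,0]$, which ties these segments to singletons already controlled by $B$, should identify $F([0,\lambda v])$ with a one-parameter family whose geometry forces $\lambda\mapsto B(\lambda v)$ to be continuous and hence $\R$-linear along every ray.

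Once $B$ is linear, I would conclude $F(K)=BK$ for all $K\in\Kn$ by comparing support functions: both $h_{F(K)}=\T(K)$ and $h_{BK}(y)=h_{K}(B^{T}y)$ are Minkowski-additive in $K$ and agree on singletons by construction of $B$, and on segments by the previous step. The extension to general convex bodies proceeds via a careful argument exploiting the additivity and bijectivity of $F$ together with the fact that every convex body is determined by its support function.
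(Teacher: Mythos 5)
The paper only cites this theorem from Artstein-Avidan and Milman and gives no proof of its own, so there is nothing in the paper to compare against; what follows is an assessment of your argument on its merits. Your reduction to the Minkowski-additive bijection $F=h^{-1}\circ\T$ of $\Kn$ is correct, and so are the first two steps: $F(\{0\})=\{0\}$ (cancel against arbitrary images using surjectivity), and $F$ preserves singletons because $\{x\}+\{-x\}=\{0\}$, giving a Cauchy-additive bijection $B$ of $\R^n$. Steps 3 and 4, however, are genuine gaps rather than compressed arguments. The identity $[0,\lambda v]+\{-\lambda v\}=[-\lambda v,0]$ says only that $F([-\lambda v,0])$ is a translate of $F([0,\lambda v])$; it constrains neither set's shape, so there is as yet no ``geometry'' that rules out a pathological additive $B$ --- you have not shown $F([0,\lambda v])$ is a segment, or even bounded. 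Likewise, two Minkowski-additive maps that agree on singletons and segments need not agree on all of $\Kn$ (a triangle is not a zonotope), so ``a careful argument exploiting additivity and bijectivity'' is where the entire difficulty lives, not a detail to be filled in.

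The rigidity you are missing lives in the \emph{unbounded} elements of $\Kn$, which your outline never touches. Since $A+A=A$ exactly when $A$ is a closed convex cone with apex $0$, the bijection $F$ permutes such cones; since containment of cones is encoded by $C_1\subseteq C_2\iff C_1+C_2=C_2$, $F$ sends rays from $0$ (minimal nonzero cones) to rays and halfspaces $H_u^0=\{x:\langle x,u\rangle\le 0\}$ (maximal cones $\ne\R^n$) to halfspaces; and $K$ is bounded iff $K+C=K$ forces $C=\{0\}$, so $F$ preserves boundedness. Now $[0,\lambda v]+\R_{\ge 0}v=\R_{\ge 0}v$ forces $F([0,\lambda v])$ to be a bounded subset of the ray $F(\R_{\ge 0}v)=\R_{\ge 0}w$ containing $0$, hence equal to $[0,b_\lambda w]$; applying $F$ to $[0,\lambda v]+\R_{\le 0}v=\{\lambda v\}+\R_{\le 0}v$ gives $B(\lambda v)=b_\lambda w$; and since $\lambda\mapsto b_\lambda$ is a nonnegative additive function on $[0,\infty)$ it is automatically linear, which is what makes $B$ linear. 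Finally, the normalized map $G=B^{-1}\circ F$ fixes every singleton and, by comparing which rays they absorb, every halfspace through $0$; applying $G$ to $K+H_u^0=H_u^0+h_K(u)u$ then yields $h_{G(K)}(u)=h_K(u)$ for all $u$, so $G=\mathrm{id}$ and $F(K)=BK$. Your outline thus has the right shape, but the mechanisms you propose for steps 3 and 4 fail as stated and must be replaced by this cone/ray/halfspace analysis.
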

We will now shift our attention from bodies to functions. In recent
years, many notions and results from convexity were generalized from
the class of convex bodies to larger domains. One usual choice for
such a domain is the class of \emph{log-concave} functions. To give
an exact definition, we can define 
\[
\cvx=\left\{ \phi:\R^{n}\to(-\infty,\infty]:\ \begin{array}{c}
\phi\text{ is convex, lower semicontinuous and }\\
\phi(x)<\infty\text{ for some }x
\end{array}\right\} ,
\]
 and then the class of log-concave functions is simply
\[
\lc=\left\{ e^{-\phi}:\ \phi\in\cvx\right\} .
\]
 Put differently, a function $f:\R^{n}\to[0,\infty)$ is log-concave
if $\left(-\log f\right)$ is a convex function. We always assume
our log-concave functions are upper semicontinuous, and explicitly
exclude the constant function $f\equiv0$. There is a natural embedding
of $\Kn$ into $\lc$, which maps every convex body $K$ to its characteristic
function 
\[
\o_{K}=\begin{cases}
1 & x\in K\\
0 & \text{otherwise.}
\end{cases}
\]

Some notions from convexity extend easily to the class of log-concave
functions. For example, since 
\[
\int_{\R^{n}}\o_{K}(x)dx=\text{Vol}(K),
\]
 one can say that the integral of a function $\int f$ extends the
notion of the volume of a convex body $\text{Vol}(K)$. Other extensions
might not be as obvious. It is by now standard to extend the notion
of Minkowski addition by the operation known as sup-convolution, or
Asplund sum: For $f,g\in\lc$ we define their sum by 
\[
\left(f\star g\right)(x)=\sup_{y+z=x}f(y)g(z).
\]
 Notice that this is indeed a generalization, in the sense that $1_{K}\star1_{T}=1_{K+T}$. 

Similarly, it is standard to extend the notion of support function
by defining 
\[
h_{f}=\left(-\log f\right)^{\ast}
\]
 for $f\in\lc.$ The $\ast$ here denotes the classic Legendre transform,
defined by
\[
\phi^{\ast}(y)=\sup_{x\in\R^{n}}\left(\left\langle x,y\right\rangle -\phi(x)\right).
\]
 Again, this is a proper generalization, in the sense that $h_{\o_{K}}=h_{K}$,
as one easily checks.

It turns out that support functions of log-concave functions share
most of the important properties of support functions of convex bodies.
More specifically, the function $\T:\lc\to\cvx$ mapping $f$ to $h_{f}$
has the following properties:
\begin{enumerate}
\item [(Q1)]$\mathcal{T}$ is bijective.
\item [(Q2)]$\mathcal{T}$ is order preserving: $f_{1}\le f_{2}$ if and
only if $\T f_{1}\le\T f_{2}$. 
\item [(Q3)]$\T$ is additive: For every $f_{1},f_{2}$ we have $\T(f_{1}\star f_{2})=\T f_{1}+\T f_{2}$.
\end{enumerate}
Again, one can use properties (Q1)-(Q3) to uniquely characterize the
support function up to a linear change of variables. In \cite{artstein-avidan_concept_2009}
Artstein-Avidan and Milman prove
\begin{thm*}
Assume $\T:\lc\to\cvx$ satisfies (Q1) and (Q2). Then there exists
an invertible affine map $B:\R^{n}\to\R^{n}$ , constants $C_{1},C_{2}$
and a vector $v\in\R^{n}$ such that 
\[
\left(\T f\right)(x)=C_{1}\cdot h_{f}(Bx)+\left\langle x,v\right\rangle +C_{2}
\]
 for all $f\in\lc$.
\end{thm*}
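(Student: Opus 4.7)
The plan is to compose with the standard transform $\T_{0}f=h_{f}=(-\log f)^{\ast}$ to reduce to classifying order-preserving bijections $S\colon\cvx\to\cvx$, and then to exploit the order-theoretic characterisation of affine functions as the join-irreducible elements of $\cvx$.

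Since $\T_{0}$ satisfies (Q1) and (Q2) by standard Legendre duality, $S:=\T\circ\T_{0}^{-1}$ is an order-preserving bijection of $\cvx$, and the task becomes showing that every such $S$ has the form $(S\phi)(x)=C_{1}\phi(Bx)+\langle v,x\rangle+C_{2}$. A case analysis will identify the join-irreducibles of $\cvx$---those $\phi$ for which $\phi=\max(\phi_{1},\phi_{2})$ with $\phi_{i}\le\phi$ forces some $\phi_{i}=\phi$---as exactly the affine functions $\ell(x)=\langle a,x\rangle+b$. Indeed, an affine $\ell$ is join-irreducible because $\phi_{i}-\ell$ is convex and bounded above by $0$ on all of $\R^{n}$ and hence constant, while every other $\phi\in\cvx$ admits an explicit two-piece decomposition: a \emph{tangent-truncation} (linearise $\phi$ outside two opposing half-spaces along a direction of strict convexity) when $\phi$ has full domain and is not affine, and a \emph{one-sided affine extension} construction when $\phi$ has proper effective domain. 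Since $S$ preserves join-irreducibility, it restricts to a bijection on affines, which, using that $\ell_{1}\le\ell_{2}$ iff equal slopes and ordered constants, must take the form $S(\langle a,x\rangle+b)=\langle\pi(a),x\rangle+\mu(a,b)$ for a bijection $\pi\colon\R^{n}\to\R^{n}$ and increasing bijections $\mu(a,\cdot)\colon\R\to\R$.

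Every $\phi\in\cvx$ is the pointwise---hence lattice---supremum of its affine minorants $\ell_{a}(x)=\langle a,x\rangle-\phi^{\ast}(a)$, so by join-preservation
\[
(S\phi)(x)=\sup_{a\in\R^{n}}\bigl[\langle\pi(a),x\rangle+\mu(a,-\phi^{\ast}(a))\bigr].
\]
The main technical step is to rigidify $\pi$ and $\mu$: the requirement $S\phi\in\cvx$ for every $\phi\in\cvx$, together with the identity $S\max(\ell_{1},\ell_{2})=\max(S\ell_{1},S\ell_{2})$ (whose right-hand side is piecewise-affine with only the two slopes $\pi(a_{1})$ and $\pi(a_{2})$), yields functional equations pinning $\pi(a)=Aa+w$ as an invertible affine map and $\mu(a,b)=\alpha b+\langle u,a\rangle+\gamma$ as jointly affine with $\alpha>0$. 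Substituting back and using $\phi^{\ast\ast}=\phi$ rearranges the supremum into $(S\phi)(x)=\alpha\,\phi(Bx)+\langle w,x\rangle+\gamma$ with $B(x)=(A^{T}x+u)/\alpha$, and undoing the composition with $\T_{0}$ produces the stated formula for $\T f$.

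The main obstacle is this last rigidification: extracting joint affinity of $(\pi,\mu)$ from the soft condition ``$S\phi\in\cvx$ for all $\phi$'' requires a delicate analysis of test inputs, especially the behaviour as slopes degenerate or collapse. The join-irreducibility characterisation, by contrast, is elementary once one has the tangent-truncation construction and the fact that a convex function bounded above on $\R^{n}$ is constant.
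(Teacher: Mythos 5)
The paper does not prove this theorem itself; it is quoted from Artstein--Avidan and Milman (\cite{artstein-avidan_concept_2009}), so there is no in-paper proof to compare against. Evaluating your sketch on its own terms:

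The reduction to classifying order-preserving bijections $S\colon\cvx\to\cvx$ via $S=\T\circ\T_0^{-1}$ is sound, and the idea of identifying affine functions as the join-irreducible elements of $(\cvx,\le)$ is a genuinely nice lattice-theoretic observation that differs from the route in the literature (the original proof, like the proof of Theorem \ref{thm:char-cvx} in this paper, instead pushes through \emph{delta functions}, tangency, and collinearity). The half of your characterization that affine $\ell$ is join-irreducible is airtight: $\phi_i-\ell$ is a finite convex function bounded above and hence constant. The converse (every non-affine $\phi$ is join-reducible) is plausible but your two constructions --- ``tangent-truncation'' and ``one-sided affine extension'' --- are only named, not carried out; in particular the full-domain, non-strictly-convex (piecewise-affine) case and the proper-domain case each need an explicit two-piece decomposition in $\cvx$ with both pieces lower semicontinuous and convex.

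The real gap is the ``rigidification'' step, which you acknowledge but do not fill. It is not a soft technicality: your own formula $S\max(\ell_1,\ell_2)=\max(S\ell_1,S\ell_2)$, when unwound through the representation $(S\phi)(x)=\sup_a[\langle\pi(a),x\rangle+\mu(a,-\phi^\ast(a))]$, forces $\pi(\lambda a_1+(1-\lambda)a_2)\in[\pi(a_1),\pi(a_2)]$ for all $\lambda\in[0,1]$ --- that is, $\pi$ (a bijection of $\R^n$, with $\pi^{-1}$ enjoying the same property via $S^{-1}$) maps line segments onto line segments. At that point you are not extracting ``functional equations'' by hand; you are one sentence away from invoking the fundamental theorem of affine geometry, which is exactly the tool the paper uses in step 6 of the proof of Theorem \ref{thm:char-cvx} and which Artstein--Avidan and Milman use in their proof of the statement at hand. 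Without either carrying out a genuinely elementary derivation of the joint affinity of $(\pi,\mu)$, or explicitly appealing to the fundamental theorem of affine geometry together with a matching argument for $\mu$, the proof is incomplete. I would recommend restructuring: after establishing that $S$ restricts to a bijection on affines and preserves the segment structure of slopes, cite the fundamental theorem of affine geometry for $\pi$, then derive the affinity of $\mu$ from order preservation along each slope-fiber and the compatibility of $\mu$ across slopes furnished by the same max-of-two-affines identity.
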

Additionally, in \cite{artstein-avidan_characterization_2010} the
same authors prove the following:
\begin{thm*}
Assume $\T:\lc\to\cvx$ satisfies (Q1) and (Q3). Then there exists
an invertible affine map $B:\R^{n}\to\R^{n}$ and a constant $C$
such that 
\[
\left(\T f\right)(x)=C\cdot h_{f}(Bx)
\]
 for all $f\in\lc$.
\end{thm*}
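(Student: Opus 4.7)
The plan is to reduce the statement to the convex body characterization (the (P1)+(P3) theorem stated just above) via the embedding $K\mapsto\o_{K}$ of $\Kn$ into $\lc$. Since $\o_{K_{1}}\star\o_{K_{2}}=\o_{K_{1}+K_{2}}$, the restriction $\widetilde{\T}(K):=\T(\o_{K})$ is an injective additive map $\Kn\to\cvx$. Applying additivity to $\o_{\{0\}}\star\o_{\{0\}}=\o_{\{0\}}$ gives $\T(\o_{\{0\}})\equiv 0$, and iterating $\o_{nK}=\o_{K}\star\cdots\star\o_{K}$ yields $\widetilde{\T}(nK)=n\widetilde{\T}(K)$, hence rational positive homogeneity.

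The main obstacle is that $\widetilde{\T}(K)$ a priori only lives in $\cvx$ and not in $\Hn$---indeed the anticipated formula $\widetilde{\T}(K)(x)=C\cdot h_{K}(Bx)$ is not positively homogeneous whenever $B$ has a nontrivial translation part---so the convex body theorem cannot be applied to $\widetilde{\T}$ directly. I would extract the positively homogeneous piece via the recession function $R(K)(x):=\lim_{t\to\infty}\widetilde{\T}(K)(tx)/t$, which defines an additive map $\Kn\to\Hn$. Verifying that $R$ is bijective onto $\Hn$ is the most delicate step: since (Q3) supplies no direct continuity or monotonicity for $\T$, one must extract these algebraically from the bijectivity of $\T$ on all of $\lc$, upgrading the rational positive homogeneity of $\widetilde{\T}$ to a genuine one through an algebraic squeeze. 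Once this is done, the convex body theorem yields an invertible linear $A$ and a scalar $C$ with $R(K)=C\cdot h_{A(K)}$.

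The translation component of $B$ then comes from analyzing point masses separately. The map $y\mapsto\T(\o_{\{y\}})$ is additive from $\R^{n}$ to $\cvx$, and $\T(\o_{\{y\}})+\T(\o_{\{-y\}})=0$ with both summands convex forces each $\T(\o_{\{y\}})$ to be affine. Its linear part is already encoded by $A$, while its constant part is a linear function of $y$ (by additivity), producing a vector $b$ that upgrades $A$ to an invertible affine $B(x)=Ax+b$; comparing positively homogeneous and translation parts then confirms $\widetilde{\T}(K)(x)=C\cdot h_{K}(Bx)$ on all of $\Kn$.

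Finally, to extend the formula from indicators to every $f\in\lc$, compare $\T$ with the candidate $\T'(f)(x):=C\cdot h_{f}(Bx)$. Since $\T'$ is itself a bijective additive map $\lc\to\cvx$, the composition $\Phi:=\T'^{-1}\circ\T$ is an additive bijection $\lc\to\lc$ (with respect to $\star$) that fixes every $\o_{K}$. In particular $\Phi$ commutes with all translations (through $f\star\o_{\{y\}}$) and more generally with all Asplund sums $f\star\o_{K}$; these rigidity properties, together with the bijectivity of $\Phi$, force $\Phi=\mathrm{id}$ and hence $\T=\T'$.
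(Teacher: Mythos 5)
A preliminary but important point: the paper does not prove this statement. It is quoted as a known result of Artstein-Avidan and Milman (cited as \cite{artstein-avidan_characterization_2010}), so there is no in-paper proof to compare against. The closest argument in the paper is the proof of Theorem~\ref{thm:char-cvx}, whose hypotheses and conclusion differ, and that proof proceeds along entirely different lines from yours: it analyzes the action of the map on affine and delta functions, establishes that collinearity is preserved, and then invokes the fundamental theorem of affine geometry. Your proposed reduction to the Artstein-Avidan--Milman characterization for $\Kn$ via recession functions is a genuinely different strategy.

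As a blind attempt, however, the sketch has real gaps at precisely the two places you flag. First, the claim $\T(\o_{\{0\}})\equiv 0$: the identity $\o_{\{0\}}\star\o_{\{0\}}=\o_{\{0\}}$ only gives $\T(\o_{\{0\}})(x)\in\{0,\infty\}$ pointwise; to rule out $\T(\o_{\{0\}})=\o^{\infty}_{T}$ for a proper $T$ you need to use that $\o_{\{0\}}$ is the neutral element for $\star$ together with surjectivity of $\T$ -- fixable, but not by the argument as written. Second, and more seriously, bijectivity of $R:\Kn\to\Hn$ is not established; the recession map collapses the ``bounded part'' of $\widetilde{\T}(K)$, and since (Q1)+(Q3) do not give you order preservation you have essentially no monotonicity to work with. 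The ``algebraic squeeze'' is exactly the content of the theorem you are trying to prove, and without it the reduction to the $\Kn$ result does not get off the ground. Third, the closing rigidity claim -- that an additive bijection $\Phi:\lc\to\lc$ fixing every $\o_{K}$ must be the identity -- is unproven. Translated to $\cvx$ via support functions, you are asserting that an additive bijection of $\cvx$ that fixes $\Hn$ pointwise must be the identity; commuting with $\star\,\o_{K}$ only gives $\Psi(\phi+\psi)=\Psi(\phi)+\psi$ for $\psi\in\Hn$, which is a restatement of what you already know, not a derivation of $\Psi=\mathrm{id}$. You would need, for instance, to first pin down $\Psi$ on negative constants and then on delta functions -- which is essentially the affine/delta-function analysis that the paper carries out in its proof of Theorem~\ref{thm:char-cvx}, and is closer to what the actual Artstein-Avidan--Milman argument does.
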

The last two theorems actually serve an important purpose. The definition
of the support function $h_{K}$ of a convex body exists for a long
time, and has proven itself to be extremely useful. The definition
of the support function $h_{f}$ of a log-concave function, however,
is much newer, and it is reasonable to debate the question of whether
this definition is the ``right'' one. These theorems give us a way
to justify our definitions: If, for example, one believes that (Q1)
and (Q2) are reasonable properties that any definition will have to
satisfy, then the exact definition follows immediately. 

However, as was pointed out by Vitali Milman, the assumption (Q1)
may not be as innocent as it first appears. Injectivity of $\T$ is
fairly natural, and follows easily from property (Q2) as well. Surjectivity,
on the other hand, is a more delicate matter. After all, the support
function $h_{K}$ of a convex body is not an arbitrary convex function,
but always a positively homogenous one. It is possible that $\Im\T$
should also be a proper subset of $\cvx$, and that in order to get
every function $\phi\in\cvx$ as a support function one should increase
the domain $\lc$ even further.

As it turns out, there is a well known way to extend $\lc$ to a larger
class of functions. Consider the following definition:
\begin{defn}
\label{def:a-concave}Fix $-\infty\le\alpha\le\infty$. We say that
a function $f:\R^{n}\to[0,\infty)$ is $\alpha$-concave if $f$ is
supported on some convex set $\Omega$, and for every $x,y\in\Omega$
and $0\le\lambda\le1$ we have
\[
f\left(\lambda x+(1-\lambda)y\right)\ge\left[\lambda f(x)^{\alpha}+\left(1-\lambda\right)f(y)^{\alpha}\right]^{\frac{1}{\alpha}}.
\]
 For $\alpha=-\infty,0,\infty$ we understand this inequality in the
limit sense. This means that $f$ is $\left(-\infty\right)$-concave
if 
\[
f\left(\lambda x+(1-\lambda)y\right)\ge\min\left\{ f(x),f(y)\right\} ,
\]
 is $0$-concave if it is log-concave: 
\[
f\left(\lambda x+(1-\lambda)y\right)\ge f(x)^{\lambda}f(y)^{1-\lambda},
\]
 and $\infty$-concave if 
\[
f\left(\lambda x+(1-\lambda)y\right)\ge\max\left\{ f(x),f(y)\right\} ,
\]
 which implies that $f$ is constant on $\Omega$. 
\end{defn}
In this definition we follow the conventions of Brascamp and Lieb
in \cite{brascamp_extensions_1976}, but the notion can be traced
to the works of Avriel \cite{avriel_r-convex_1972} and Borell \cite{borell_convex_1975}.
The interested reader may also consult \cite{bobkov_convex_2010}
for applications more similar in spirit to this paper.

One of the goals of this paper is to demonstrate how some constructions,
usually carried out for log-concave functions, can also be carried
out for general $\alpha$-concave functions. These constructions will
include the support function, and the mean width. This discussion,
together with a more systematic treatment of $\alpha$-concave functions,
will appear in section \ref{sec:Mean-width}. For now, let us just
mention the fact that if $\alpha_{1}<\alpha_{2}$, then every $\alpha_{2}$-concave
function, is also $\alpha_{1}$-concave. We can use this fact to generate
the following example:
\begin{example}
Fix $-\infty<\alpha<0$. The claim that $f:\R^{n}\to[0,\infty)$ is
$\alpha$-concave is equivalent to saying that $f^{\alpha}$ is convex.
This means we can write every $\alpha$-concave function, and hence
every log-concave function, as 
\[
f(x)=\left[1-\alpha\cdot\phi(x)\right]^{\frac{1}{\alpha}}
\]
for some convex function $\phi$ (notice that $-\alpha$ is positive).
Define $\T_{\alpha}:\lc\to\cvx$ by 
\[
\T_{\alpha}f=\phi^{\ast}.
\]
 It is easy to verify that $\T_{\alpha}$ extends the classic support
function, in the sense that if $f=\o_{K}$ then $\T_{\alpha}f=h_{K}$.
It is equally easy to check that this $\T_{\alpha}$ satisfies property
(Q2), but not property (Q1). Since $\T_{\alpha}f$ is in general very
different from $h_{f}$, we see that the Artstein-Milman characterization
theorem fails completely without the surjectivity assumption. 

To gain an insight into the origins of this example, notice that as
$\alpha\to0$ we have $\phi\to\left(-\log f\right)$, so $\T_{\alpha}f\to h_{f}$,
at least on a heuristic level. Intuitively, one may think of $\T_{\alpha}f$
as the ``right'' definition of the support function of an $\alpha$-concave
function, and the standard definition is just the special case $\alpha=0$.
We will revisit this point of view in section \ref{sec:Mean-width}. 
\end{example}
It is interesting to note that the above example does not satisfy
property (Q3), so at least in this sense it is less natural then the
standard construction. Vitali Milman asked whether it possible to
assume \emph{both} (Q2) and (Q3), and prove a characterization theorem
which does not require surjectivity. Indeed, this is the case:
\begin{thm}
\label{thm:char-lc}Assume we are given a function $\S:\lc\to\cvx$
and an operation $\oplus:\lc\times\lc\to\lc$ with the following properties:
\begin{enumerate}
\item $\S$ is order preserving: $f\le g$ if and only if $\S f\le\S g$.
\item $\S$ extends the usual support functional: If $f=\o_{K}$, then $\S f=h_{K}$.
\item $\S\left(f\oplus g\right)=\S f+\S g$.
\end{enumerate}
Then we must have
\[
\left(\S f\right)(x)=\frac{1}{C}h_{f}(C\cdot x)
\]
 for some $C>0$, and $f\oplus g=f\star g$. 
\end{thm}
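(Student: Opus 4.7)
My plan is to reduce the theorem to the Artstein--Avidan--Milman characterization with properties~(Q1) and~(Q2) stated earlier in this section: once $\S$ is known to be bijective, that theorem pins $\S$ down up to finitely many constants, which are then fixed by the normalization $\S\o_{K}=h_{K}$. Hence almost all of the work is to establish surjectivity of $\S$ (injectivity is immediate from the iff in property~(1)) and to extract the companion identity $\oplus=\star$ from the resulting formula.

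The first consequences are purely algebraic. The iff in~(1) makes $\S$ injective, and (3) combined with injectivity forces $\oplus$ to be commutative, associative and cancellative, with $\o_{\{0\}}$ as a two-sided identity, since $\S(f\oplus\o_{\{0\}})=\S f+h_{\{0\}}=\S f$. Applying~(3) to two indicators and invoking~(2) gives $\S(\o_{K}\oplus\o_{T})=h_{K}+h_{T}=h_{K+T}=\S\o_{K+T}$, so by injectivity $\o_{K}\oplus\o_{T}=\o_{K+T}=\o_{K}\star\o_{T}$, and $\oplus$ and $\star$ already agree on indicators.

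The main step is to show $\S$ is surjective. My plan is to first compute $\S$ on the ``singular'' log-concave functions $t\cdot\o_{\{0\}}$ and then propagate outward. The family $\{t\cdot\o_{\{0\}}:t>0\}$ is totally ordered and forms a sub-monoid of $(\lc,\oplus)$ via a commutative, associative, order-preserving binary operation $c(s,t)$ with identity~$1$; combining this with $\S(s\cdot\o_{\{0\}}\oplus t\cdot\o_{\{0\}})=\S(s\cdot\o_{\{0\}})+\S(t\cdot\o_{\{0\}})$ and the iff order pins down $\S(t\cdot\o_{\{0\}})=(\log t)/C$ for a single constant $C>0$ and $c(s,t)=st$. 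A translation argument using $\o_{\{v\}}\oplus\o_{\{-v\}}=\o_{\{0\}}$ extends this to $\S(t\cdot\o_{\{v\}})(x)=\langle v,x\rangle+(\log t)/C$, and additivity propagates it further to $\S(t\cdot\o_{K})=h_{K}+(\log t)/C$ for every $K\in\Kn$. Every $\phi\in\cvx$ is the pointwise supremum of its affine minorants $x\mapsto\langle v,x\rangle-\phi^{\ast}(v)$, each of which is realized as $\S(e^{-C\phi^{\ast}(v)}\cdot\o_{\{v\}})$; a monotone-continuity argument for $\S$ then places $\phi$ in $\Im\S$, proving surjectivity.

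With $\S$ a bijection, the~(Q1)+(Q2) theorem gives $\S f(x)=C_{1}h_{f}(Bx)+\langle x,v\rangle+C_{2}$; evaluating on $\o_{\{0\}}$ and on general point masses $\o_{\{y\}}$ forces $v=0$, $C_{2}=0$, $B=(1/C)\,\mathrm{I}$ and $C_{1}=1/C$, yielding $\S f(x)=\tfrac{1}{C}h_{f}(Cx)$. Since $\S$ is now a dilation of the standard support functional, $\S(f\star g)=\S f+\S g$ holds automatically, and combining this with~(3) and injectivity yields $\oplus=\star$. The hardest point is the monotone-continuity of $\S$ used to pass from ``$\Im\S$ contains all the affine-plus-constant elements'' to ``$\Im\S=\cvx$''; the plan there is to show that if $f_{n}\uparrow f$ in $\lc$ then $\S f_{n}\uparrow\S f$, by considering the sums $\S f_{n}+\S g$ for suitable test functions $g$ and using additivity together with cancellativity of $\oplus$ to force the pointwise limit to lie in $\Im\S$.
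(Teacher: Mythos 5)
Your high-level strategy---establish surjectivity of $\S$ and then invoke the Artstein--Avidan--Milman (Q1)+(Q2) theorem---is a legitimate logical route, and your preliminary algebraic observations (injectivity from the iff in (1), $\o_{\{0\}}$ as the $\oplus$-identity, agreement of $\oplus$ and $\star$ on indicators, and the final normalization of the constants using $\o_{\{0\}}$ and $\o_{\{y\}}$) are all correct. This is, however, a different route from the paper's: the paper deliberately avoids the (Q1)+(Q2) theorem and instead reduces the claim to an intrinsic characterization of maps $\T:\cvx\to\cvx$ fixing positively homogeneous functions and having additively closed image, which is then proved from scratch via a long argument with affine functions, delta functions, tangents, and collinearity.

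The problem is that the surjectivity argument, which you rightly flag as the crux, has two genuine gaps. First, the claim $\S(t\cdot\o_{\{0\}})=(\log t)/C$ is not pinned down by ``the iff order'' together with additivity. For $t<1$ one can indeed argue $\S(t\cdot\o_{\{0\}})$ is a constant (it lies below $\S\o_{\{0\}}=0$, and a convex function bounded above on $\R^n$ is constant), but for $t>1$ it is only bounded \emph{below}, which does not force constancy, so it is not even clear that $\{t\cdot\o_{\{0\}}\}_{t>0}$ is closed under $\oplus$. Worse, even on the range $t\le 1$, what you can deduce is only that the set $\{\S(t\cdot\o_{\{0\}})\}$ is an additive sub-semigroup of $(-\infty,0]$ and that $s\cdot\o_{\{0\}}\oplus t\cdot\o_{\{0\}}=t'\cdot\o_{\{0\}}$ for \emph{some} $t'$; concluding $t'=st$ (equivalently, that $t\mapsto\S(t\cdot\o_{\{0\}})$ is a multiplicative-to-additive homomorphism) already presupposes that $\oplus=\star$ on this family, which is what you are trying to prove. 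The paper handles the corresponding step by a tangent-line argument for ``positively affine'' functions and by applying the sub-monoid lemma to the image set $H_a=\{c':\langle\cdot,a\rangle+c'\in\Im\T\}$, which is additively closed (because $\Im\T$ is) and contains both signs, so that lemma actually applies; your sub-monoid lives entirely in $(-\infty,0]$ and so the dichotomy ``cyclic or dense'' cannot be invoked. Second, the ``monotone-continuity'' of $\S$ is asserted but not established, and the sketch (``consider $\S f_n+\S g$ and use cancellativity'') does not obviously produce it. Order-preserving bijections need not commute with monotone suprema, and the natural countable approximants (maxima of finitely many scaled point masses) are not even log-concave, so they are not admissible test functions. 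The paper sidesteps this entirely: it never appeals to a limiting/continuity principle, but instead proves $\T\phi=\phi$ by showing that $\phi$ and $\T\phi$ have the same affine tangents, once surjectivity on the \emph{affine} functions alone has been secured. Until these two gaps are closed, the surjectivity claim---and hence the appeal to the (Q1)+(Q2) theorem---is unsupported.
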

Note that even though we do not assume surjectivity a priori, it follows
a posteriori that $\S$ must be onto $\cvx$. Another interesting
feature of the theorem is that our third assumption is somewhat weaker
than (Q3), as we only need to assume that $S$ is additive with respect
to \emph{some }addition operation $\oplus$. Therefore this theorem
characterizes not only the support function, but the sup-convolution
operation as well. 

Theorem \ref{thm:char-lc} will follow easily from the following theorem:
\begin{thm}
\label{thm:char-cvx}Assume a function $\T:\cvx\to\cvx$ satisfies
the following:
\begin{enumerate}
\item $\T$ is order preserving: $\phi\le\psi$ if and only if $\T\phi\le\T\psi$.
\item If $\phi$ is a positively homogenous function then $\T\phi=\phi.$
\item The set $\Im\T=\left\{ \T\phi:\ \phi\in\cvx\right\} $ is closed under
pointwise addition.
\end{enumerate}
Then $\left(\T\phi\right)(x)=\frac{1}{C}\phi(Cx)$ for some $C>0$. \end{thm}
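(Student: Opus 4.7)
The plan is to reduce, via a sequence of structural observations, to showing that $\T$ coincides with one of the scalings $\rho_{C}(\phi)(x):=\phi(Cx)/C$, $C>0$. First, hypothesis (1) being a two-sided iff immediately forces $\T$ to be injective: $\T\phi=\T\psi$ yields $\T\phi\le\T\psi\le\T\phi$, hence $\phi\le\psi\le\phi$. Combined with (2), this gives the useful symmetry that $\phi$ and $\T\phi$ share exactly the same positively homogeneous minorants and majorants: for any pos.\ hom.\ $h$, $h\le\phi$ iff $h=\T h\le\T\phi$, and similarly with $\ge$. Already this sandwiches $\T\phi$ between $\phi$'s pos.\ hom.\ envelopes.

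Next, a direct check shows that each $\rho_{C}$ itself satisfies all three axioms --- (1) and (2) are immediate, and (3) is automatic since $\rho_{C}$ is a bijection of $\cvx$ that commutes with pointwise addition. So the theorem's real content is that the axioms force $\T\in\{\rho_{C}:C>0\}$. Exploiting the group structure $\rho_{C_{1}}\circ\rho_{C_{2}}=\rho_{C_{1}C_{2}}$, after composing $\T$ with a suitable $\rho_{1/C}$ one may hope to normalize so that $\T$ fixes a chosen non-homogeneous reference function and then argue $\T=\mathrm{id}$.

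The heart of the proof is to show that $\T$ commutes with addition of linear functions: $\T(\phi+\ell_{v})=\T\phi+\ell_{v}$ for every $v\in\R^{n}$, where $\ell_{v}(x)=\langle v,x\rangle$. By (3), $\T\phi+\ell_{v}\in\Im\T$, so $\T\phi+\ell_{v}=\T\chi_{v}$ for some $\chi_{v}\in\cvx$, and one must identify $\chi_{v}=\phi+\ell_{v}$. The crucial fact is that $\ell_{v}$ \emph{and} $-\ell_{v}$ are both convex and positively homogeneous, so (3) lets one freely add or subtract linear functions from elements of the image, while the envelope symmetry from the first paragraph propagates these shifts coherently through $\T$. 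The main obstacle I anticipate is exactly this: turning the abstract closure-under-addition into a cancellation principle strong enough to pin down $\chi_{v}$, given that pos.\ hom.\ envelopes alone do not determine a function in $\cvx$. Once the linear commutation is in hand, the universal constant $C$ can be read off from $\T$'s action on a single non-homogeneous test function such as $\|x\|^{2}$, and the form $\T\phi(x)=\phi(Cx)/C$ then extends to all of $\cvx$ by order preservation together with the fact that every convex lsc function is the pointwise supremum of its affine minorants.
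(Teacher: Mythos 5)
Your proposal is not a proof; it is a strategy with the decisive step explicitly left open, as you yourself acknowledge (``the main obstacle I anticipate is exactly this\dots''). Let me be concrete about the gap and also indicate how the paper closes it, since the route is quite different.

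\textbf{The gap.} The claim $\T(\phi+\ell_{v})=\T\phi+\ell_{v}$ is plausible and would indeed be useful, but you offer no mechanism to prove it. Knowing $\T\phi+\ell_{v}\in\Im\T$ only gives you \emph{some} preimage $\chi_{v}$; to identify $\chi_{v}$ with $\phi+\ell_{v}$ you would need a cancellation argument, and the only tools available are order preservation and fixing positively homogeneous functions. Your sandwich observation (same pos.\ hom.\ minorants/majorants) is true but far too weak: a positively homogeneous envelope does not remember the constant term of an affine function, so it cannot separate $\phi+\ell_{v}$ from, say, $\phi+\ell_{v}+c$. Even if you somehow obtained linear commutation, the finishing move --- reading $C$ off from $\|x\|^{2}$ and then extending by ``every convex lsc function is the sup of its affine minorants'' --- still breaks down, because that extension requires knowing how $\T$ acts on \emph{all} affine functions $\langle x,a\rangle+c$, including the constants $c\neq 0$, and linear commutation tells you nothing about those. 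So both the central step and the endgame have unfilled holes.

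\textbf{What the paper actually does.} The paper's proof works first on affine functions in full, not just linear ones. It shows: (i) affine functions map to affine functions with the same linear part (using order preservation against the linear function they are tangent to, treating $c<0$ and $c>0$ separately, and a density argument via a sub-semigroup lemma to show the image contains \emph{all} affine functions); (ii) delta functions map to delta functions (using closure under addition against a cone function and a constant); (iii) tangency is preserved in both directions; (iv) via Lemma~\ref{lem:collinearity} and the fundamental theorem of affine geometry, the induced map on the parameter space $\R^{n+1}$ of affine functions is itself affine, hence $\T(\langle x,a\rangle+c)=\langle x,a\rangle+\gamma c$. Only then is the degree of freedom spent by normalizing $\gamma=1$, after which $\phi$ and $\T\phi$ have identical tangents and must be equal. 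That chain of arguments is exactly what replaces the ``cancellation principle'' you say you need but cannot supply. You should study parts 3, 4, and 6 of the paper's proof; they contain the non-obvious ideas (the sub-semigroup/density lemma, the delta-function trick, and the collinearity argument) that your outline is missing.
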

\begin{proof}
[Proof of the reduction]Assume theorem \ref{thm:char-cvx} holds.
In order to prove theorem \ref{thm:char-lc}, choose $\S:\lc\to\cvx$
satisfying all of the assumptions. Define $\T:\cvx\to\cvx$ by 
\[
\T\phi=\S\left(e^{-\phi^{\ast}}\right).
\]
 It it easy to see that $\T$ satisfies all of the assumptions of
theorem \ref{thm:char-cvx}, so 
\[
\left(\S\left(e^{-\phi^{\ast}}\right)\right)(x)=\left(\T\phi\right)(x)=\frac{1}{C}\phi(Cx)
\]
 for some $C>0$. For every $f\in\lc$ define $\phi=h_{f}$, and notice
that $e^{-\phi^{\ast}}=f$. Hence we get 
\[
\left(Sf\right)(x)=\frac{1}{C}h_{f}(Cx)
\]
 like we wanted. In particular, for every $f,g\in\lc$ we will get
\[
S(f\star g)=Sf+Sg=S(f\oplus g),
\]
 and since $\S$ is injective $f\oplus g=f\star g$. This completes
the proof.
\end{proof}
The rest of this paper is organized as follows: Section \ref{sec:proof}
will include the proof of theorem \ref{thm:char-cvx}. The proof is
rather long, and is composed of several independent ingredients. Some
of the ingredients are fairly standard by now, but some are probably
new. Let me thank Alexander Segal and Boaz Slomka for providing some
of these arguments. Section \ref{sec:Mean-width} will be devoted
to $\alpha$-concave functions. We will extend the notions discussed
in this section, specifically addition and support functions, to the
realm of $\alpha$-concave functions. Finally, we will define the
mean width of an $\alpha$-concave function, and generalize some known
results to the new setting.

\section{\label{sec:proof}Proof of theorem \ref{thm:char-cvx}}

We will now prove Theorem \ref{thm:char-cvx}. As the proof is quite
long, we will divide it into several parts:

\subsection*{1. ``Negatively affine'' functions}

Fix a vector $a\in\R^{n}$. The linear function $\ell(x)=\left\langle x,a\right\rangle $
is 1-homogenous, so $\T\ell=\ell$. We will now deal with affine functions
of the form 
\[
\ell_{1}(x)=\left\langle x,a\right\rangle +c
\]
 for some $c<0$ (we call such functions ``negatively affine functions'').
Since $\ell_{1}\le\ell$ and $\T$ is order preserving we must have
$\T\ell_{1}\le\T\ell=\ell$, and it follows immediately that 
\[
\left(\T\ell_{1}\right)(x)=\left\langle x,a\right\rangle +c'
\]
 for some constant $c'<0$. 

Later we will show that $\Im\T$ contains all affine functions. For
now, just notice that if $n\in\N$ then we can write
\[
\left\langle x,a\right\rangle +nc'=n\left[\left\langle x,a\right\rangle +c'\right]+\left\langle x,a-na\right\rangle ,
\]
 and since $\Im\T$ is closed under addition we get that $\left\langle x,a\right\rangle +nc'\in\Im\T$.
In particular, there exists a sequence $c_{n}\to-\infty$ such that
$\left\langle x,a\right\rangle +c_{n}\in\Im\T$. 

Finally, notice that if a negatively affine function 
\[
\ell'(x)=\left\langle x,a\right\rangle +c'
\]
 is in $\Im\T$, then we can apply the same reasoning as above for
$\T^{-1}$ and conclude that $\left(\T^{-1}\ell'\right)(x)=\left\langle x,a\right\rangle +c$
for some $c<0$.

\subsection*{2. ``Positively affine'' functions}

Assume now that we are given a function $\ell(x)=\left\langle x,a\right\rangle +c$
with $a\in\R^{n}$ and $c>0$. Define $\phi'=\T\ell$, and let 
\[
\ell_{1}^{\prime}(x)=\left\langle x,b\right\rangle +d_{1}^{\prime}
\]
 be any tangent to $\phi'$. As we saw before, we can choose a constant
$d_{2}^{\prime}\le\min\left(d_{1}^{\prime},0\right)$ such that 
\[
\ell_{2}^{\prime}(x)=\left\langle x,b\right\rangle +d_{2}^{\prime}\in\Im\T.
\]
 Since $\ell_{2}^{\prime}\le\ell_{1}^{\prime}\le\phi'=\T\ell$, it
follows that $\ell_{2}\le\ell$, where
\[
\ell_{2}(x)=\left(\T^{-1}\ell_{2}^{\prime}\right)(x)=\left\langle x,b\right\rangle +d_{2},
\]
 and then we must have $b=a$. In other words, every tangent to $\phi'$
is of the form $\left\langle x,a\right\rangle +d$ for some $d$,
and this can only happen if 
\[
\phi'(x)=\left\langle x,a\right\rangle +c'
\]
 for some $c'$. Since $\ell(x)\ge\left\langle x,a\right\rangle $
we of course have $c'>0$. 

Again, if $\ell'(x)=\left\langle x,a\right\rangle +c'$ for $c'>0$
happens to be in $\Im\T$, we can repeat the argument in reverse and
conclude that 
\[
\left(\T^{-1}\ell'\right)=\left\langle x,a\right\rangle +c
\]
 for some $c>0$.

\subsection*{3. Surjectivity on affine functions}

So far we have seen that that image of any affine function is affine.
We will now show that all affine functions are in $\Im\T$. Fix $a\in\R^{n}$,
and define $f_{a}:\R\to\R$ according to the formula
\[
\T\left(\left\langle x,a\right\rangle +c\right)=\left\langle x,a\right\rangle +f_{a}(c).
\]
Also define 
\[
H_{a}=\text{Im}(f_{a})=\left\{ c'\in\R:\ \left\langle x,a\right\rangle +c'\in\Im\T\right\} .
\]
 Notice that $H_{a}$ is closed under addition: If $c_{1},c_{2}\in H_{a}$
then 
\[
\left\langle x,a\right\rangle +\left(c_{1}+c_{2}\right)=\left[\left\langle x,a\right\rangle +c_{1}\right]+\left[\left\langle x,a\right\rangle +c_{2}\right]+\left\langle x,-a\right\rangle \in\Im\left(\T\right),
\]
 so $c_{1}+c_{2}\in H_{a}$ as well. We will now apply the following
Lemma:
\begin{lem}
\label{lem:submonoids}Assume $H\subseteq\R$ is a subset with the
following properties:
\begin{enumerate}
\item If $x,y\in H$ then $x+y\in H$.
\item There exists $x\in H$ such that $x>0$.
\item There exists $x\in H$ such that $x<0$.
\end{enumerate}
Then $H$ is either a cyclic subgroup of $\R$ or dense in $\R$.
\end{lem}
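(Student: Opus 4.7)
The plan is to set $d := \inf\bigl(H \cap (0, \infty)\bigr)$ and split on whether $d = 0$ or $d > 0$. If $d = 0$, density follows by direct approximation: given $x \in \R$ and $\epsilon > 0$, I fix any $b \in H$ with $b < 0$ and pick $a \in H \cap (0, \epsilon)$; for $m \geq 1$ large enough that $x - mb > a$, the choice $n := \lfloor (x - mb)/a \rfloor \geq 1$ yields $na + mb \in H$ within $a < \epsilon$ of $x$.

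For $d > 0$ I must show $H$ is cyclic, and the first step is a rationality reduction via equidistribution. If $a \in H \cap (0,\infty)$ and $b \in H \cap (-\infty, 0)$ had $a/(-b) = \alpha \notin \mathbb{Q}$, then by Weyl equidistribution, for any $\delta > 0$ there would exist $k \geq 1/\alpha$ with $\ell := \lfloor k\alpha \rfloor \geq 1$ and $0 < k\alpha - \ell < \delta$; the element $ka + \ell b = (k\alpha - \ell)(-b) \in H$ would then be positive and less than $\delta(-b)$, contradicting $d > 0$. Hence every such ratio is rational; composing, all ratios of nonzero elements of $H$ are rational, so $H \subseteq \mathbb{Q} b$ for any fixed negative $b \in H$. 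Rescaling by $1/(-b)$, I may assume $H \subseteq \mathbb{Q}$ and $-1 \in H$. Then $H$ contains all negative integers (sums of $-1$'s); for any $h = p/q \in H \cap (0,\infty)$ in lowest terms, $qh = p \in H \cap \N$; and adding $-1$'s gives $1, 2, \ldots \in H$, so $\mathbb{Z} \subseteq H$.

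The main obstacle is the final structural step. Let $\pi : \mathbb{Q} \to \mathbb{Q}/\mathbb{Z}$ be the projection and set $S := \pi(H)$. Then $S$ is a sub-semigroup of the torsion group $\mathbb{Q}/\mathbb{Z}$; for each $s \in S$ of order $n$, both $ns = 0$ and $(n-1)s = -s$ lie in $S$, upgrading $S$ to a subgroup. An infinite subgroup of $\mathbb{Q}/\mathbb{Z}$ contains elements of arbitrarily large order $n$, each generating a cyclic group $\frac{1}{n}\mathbb{Z}/\mathbb{Z} \subseteq S$ and hence producing representatives $1/n \to 0$ in $H \cap (0, 1)$---contradicting $d > 0$. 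Therefore $S = \frac{1}{N}\mathbb{Z}/\mathbb{Z}$ for some $N \geq 1$, giving $H = \pi^{-1}(S) = \frac{1}{N}\mathbb{Z}$, a cyclic subgroup of $\R$.
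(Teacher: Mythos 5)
Your proof is correct but takes a genuinely different route from the paper's. The paper works entirely by elementary real analysis: it sets $a = \inf\{x \in H : x > 0\}$ and $b = \sup\{x \in H : x < 0\}$, disposes of density when either is $0$, and otherwise shows in turn that $a = -b$, that $a \in H$ and $-a \in H$ (each by taking approximating sequences and adding), and finally that $H = a\mathbb{Z}$ by a Euclidean-division argument. You instead take an algebraic/number-theoretic path: Weyl density of $\{k\alpha \bmod 1\}$ rules out irrational ratios when $d := \inf(H \cap (0,\infty)) > 0$, which pins $H$ inside $\mathbb{Q}b$; after rescaling you push $\mathbb{Z} \subseteq H \subseteq \mathbb{Q}$ and finish by classifying the image in $\mathbb{Q}/\mathbb{Z}$, using that an infinite subgroup there has elements of unbounded order (so $1/n \in H$ for arbitrarily large $n$, contradicting $d > 0$) while finite subgroups are exactly $\frac{1}{N}\mathbb{Z}/\mathbb{Z}$. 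Your version is conceptually tidy and makes the group structure transparent, at the cost of importing equidistribution and the structure theory of $\mathbb{Q}/\mathbb{Z}$; the paper's version is longer but entirely self-contained, which is probably why it is preferred in context. Two micro-points worth tightening: you should require $k \geq \max(1, 1/\alpha)$ so that both $k \geq 1$ and $\ell \geq 1$, and you should note explicitly that $0 = 1 + (-1) \in H$ and that $H = \pi^{-1}(S)$ because $\mathbb{Z} \subseteq H$ forces each coset of $\mathbb{Z}$ meeting $H$ to lie entirely in $H$. Neither is a gap, just bookkeeping.
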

This result, or slight variations thereof, is well known in some fields.
For the sake of completeness, we will prove Lemma \ref{lem:submonoids}
after we finish proving Theorem \ref{thm:char-cvx}. 

By the above discussion we see that $H_{a}$ satisfies the hypotheses
of Lemma \ref{lem:submonoids}, so it is either cyclic or dense. Since
$f_{a}$ is injective, $H_{a}$ must is uncountable, and since cyclic
groups are all countable, $H_{a}$ must be dense. Now $f_{a}:\R\to\R$
is a monotone function with dense image, and it is an easy exercise
that all such functions are onto. Therefore $H_{a}=\R$ and we proved
the desired result.

\subsection*{4. Delta functions }

We will return to affine functions shortly, but before we do we need
to discuss delta functions. For $a\in\R^{n}$ and $c\in\R$ define
\[
\delta_{a,c}(x)=\begin{cases}
c & x=a\\
\infty & \text{otherwise}.
\end{cases}
\]
 Our goal is to show that delta functions are mapped to delta functions.
Assume by contradiction that $\phi'=\T\delta_{a,c}$ is not a delta
function, so there exists $b_{1}\ne b_{2}$ such that $\phi'(b_{1}),\phi'(b_{2})<\infty$.
We will divide the proof into two cases:
\begin{caseenv}
\item There exists a constant $\lambda>0$ such that $\lambda b_{1}=b_{2}$.
Without loss of generality we can assume $\lambda>1$ (otherwise,
swap $b_{1}$ and $b_{2}$ in the following proof). Define two functions
\begin{eqnarray*}
\psi_{1}^{\prime}(x) & = & \left|x\right|\\
\psi_{2}^{\prime}(x) & = & \frac{1+\lambda}{2}\left|b_{1}\right|
\end{eqnarray*}
(where$\left|\cdot\right|$ denotes the euclidean norm). $\psi_{1}^{\prime}\in\Im\T$
as a positively homogenous function, and $\psi_{2}^{\prime}\in\Im\T$
as a constant, hence affine, function. Therefore if we define $\rho_{i}^{\prime}=\phi^{\prime}+\psi_{i}^{\prime}$
for $i=1,2$ then $\rho_{i}^{\prime}\in\Im\T$ as well. Define $\rho_{i}=\T^{-1}(\rho_{i}^{\prime})$.
Since $\rho_{i}^{\prime}\ge\phi^{\prime}=\T\delta_{a,c}$, we have
$\rho_{i}\ge\delta_{a,c}$, so $\rho_{i}=\delta_{a,c_{i}}$ for some
$c_{i}$. In particular, we must have $\rho_{1}\ge\rho_{2}$ or vice
versa. But this implies that $\rho_{1}^{\prime},\rho_{2}^{\prime}$
are comparable as well, which is a contradiction:
\begin{eqnarray*}
\rho_{1}^{\prime}(b_{1})=\phi'(b_{1})+\left|b_{1}\right| & < & \phi'(b_{1})+\frac{1+\lambda}{2}\left|b_{1}\right|=\rho_{2}^{\prime}(b_{1})\\
\rho_{1}^{\prime}(b_{2})=\phi'(b_{2})+\lambda\left|b_{1}\right| & > & \phi'(b_{2})+\frac{1+\lambda}{2}\left|b_{1}\right|=\rho_{2}^{\prime}(b_{2}).
\end{eqnarray*}

\item Now assume $b_{1}$ and $b_{2}$ are not on the same ray. In this
case define
\begin{eqnarray*}
\psi_{1}^{\prime}(x) & = & \begin{cases}
0 & x\in\R^{+}b_{1}\\
\infty & \text{otherwise}
\end{cases}\\
\psi_{2}^{\prime}(x) & = & 1,
\end{eqnarray*}
 and the rest of the proof is exactly the same as the previous case.
\end{caseenv}
In both cases we arrived at a contradiction, so we proved that indeed
$\phi'$ is a delta function like we wanted.

\subsection*{5. Tangents}

Let $\ell$ be an affine function and $\phi\in\cvx$ be arbitrary.
We say that $\ell$ is tangent to $\phi$ if $\ell\le\phi$, but $\ell+\epsilon\not\le\phi$
for every $\epsilon>0$. It is well known that 
\[
\phi=\sup\left\{ \ell:\ \ell\text{ is tangent to }\phi\right\} .
\]

Our simple claim is that $\ell$ is tangent to $\phi$ if and only
if $\ell'=\T\ell$ is tangent to $\phi'=\T\phi$. Indeed, if $\ell$
is tangent to $\phi$, then we immediately get $\ell'\le\phi'$. If
$\ell'+\epsilon\le\phi'$ for some $\epsilon>0$ then $\T^{-1}\left(\ell'+\epsilon\right)$
is an affine function such that 
\[
\ell<\T^{-1}\left(\ell'+\epsilon\right)\le\phi,
\]
 which is impossible. The other direction is proven in exactly the
same way.

In particular, if $\phi=\delta_{a,c}$, then $\ell$ is tangent to
$\phi$ if and only if $\ell(a)=c$. Therefore if $\ell$ is an affine
function and $a\in\R^{n}$ , we can always find $b\in\R^{n}$ such
that
\[
\T\left(\delta_{a,\ell(a)}\right)=\delta_{b,\left(\T\ell\right)(b)}.
\]

\subsection*{6. Collinearity}

We will identify every affine map $\ell(x)=\left\langle x,a\right\rangle +c$
with the point $p_{\ell}=(a,c)\in\R^{n+1}$. Under this identification
our map $\T$ induces a bijection $F:\R^{n+1}\to\R^{n+1}$ defined
by 
\[
F(p_{\ell})=p_{\T\ell}.
\]
 Our current goal is to show that $F^{-1}$ maps collinear points
into collinear points. The following lemma will prove itself useful:
\begin{lem}
\label{lem:collinearity}Assume $\ell_{1},\ell_{2},\ell_{3}$ are
3 affine functions such that $\ell_{1}$ and $\ell_{2}$ are not parallel.
Then $p_{\ell_{1}},p_{\ell_{2}},p_{\ell_{3}}$ are collinear if and
only if whenever $\ell_{1}(x_{0})=\ell_{2}(x_{0})$ we also have $\ell_{1}(x_{0})=\ell_{3}(x_{0})$. 
\end{lem}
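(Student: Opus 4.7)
The plan is to recast the collinearity of $p_{\ell_{1}},p_{\ell_{2}},p_{\ell_{3}}$ in $\R^{n+1}$ as a functional identity. Under the identification $\ell\mapsto p_{\ell}$, which is an $\R$-linear bijection from the space of affine functions on $\R^{n}$ onto $\R^{n+1}$ that sends affine combinations to affine combinations, collinearity of the three points amounts to the existence of a scalar $\lambda\in\R$ for which $\ell_{3}-\ell_{1}=\lambda(\ell_{2}-\ell_{1})$. Both directions of the lemma should then fall out by examining this identity on and off the locus $H=\left\{ x\in\R^{n}:\ell_{1}(x)=\ell_{2}(x)\right\} $.

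For the forward implication I would simply substitute any $x_{0}\in H$ into the relation $\ell_{3}-\ell_{1}=\lambda(\ell_{2}-\ell_{1})$; the right-hand side vanishes, giving $\ell_{3}(x_{0})=\ell_{1}(x_{0})$ in one line.

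The reverse implication is the genuine content of the lemma. The hypothesis is precisely that $\ell_{3}-\ell_{1}$ vanishes on $H$. Since $\ell_{1},\ell_{2}$ are not parallel, $\ell_{2}-\ell_{1}$ is a nonzero affine function and $H$ is a non-empty affine hyperplane in $\R^{n}$. I would then invoke the standard fact that the affine functions on $\R^{n}$ vanishing on a given hyperplane form a one-dimensional vector space, spanned by any nonzero defining equation of that hyperplane. Applying this with the defining function $\ell_{2}-\ell_{1}$ yields a scalar $\lambda$ with $\ell_{3}-\ell_{1}=\lambda(\ell_{2}-\ell_{1})$, which is exactly the required collinearity.

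The main technical point, and the step on which the whole argument hinges, is this one-dimensionality claim. I expect to dispatch it quickly by translating $H$ so it passes through the origin and noting that two linear functionals with the same kernel must be proportional. The only degenerate case to keep in mind is $\ell_{3}\equiv\ell_{1}$ (corresponding to $\lambda=0$), which gives $p_{\ell_{3}}=p_{\ell_{1}}$ and hence trivially collinear points.
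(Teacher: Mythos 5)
Your proof is correct and follows essentially the same route as the paper's: both directions hinge on the equivalence between collinearity of $p_{\ell_{1}},p_{\ell_{2}},p_{\ell_{3}}$ and the affine identity $\ell_{3}=(1-\lambda)\ell_{1}+\lambda\ell_{2}$, and the reverse direction comes down to the same linear-algebra fact (which the paper simply labels ``standard linear algebra'' and you make explicit as the one-dimensionality of the space of affine functions vanishing on the hyperplane $H=\{\ell_{1}=\ell_{2}\}$, reduced by translation to proportionality of linear functionals with equal kernel). Your explicit handling of the degenerate case $\ell_{3}\equiv\ell_{1}$ is a harmless extra precaution; there is no substantive gap.
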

Again, we will postpone the proof of Lemma \ref{lem:collinearity}
until the end of the proof of Theorem \ref{thm:char-cvx}. For now
we will use this lemma to prove the result: Assume $\ell_{1}^{\prime},\ell_{2}^{\prime},\ell_{3}^{\prime}$
are collinear affine functions, and define $\ell_{i}=\mathcal{T}^{-1}\ell_{i}^{\prime}$.
If $\ell_{1}^{\prime}$ and $\ell_{2}^{\prime}$ are parallel then
all six functions are parallel to each other and there is nothing
to prove.

In the general case, assume $a$ is any point such that
\[
\ell_{1}(a)=\ell_{2}(a)=c,
\]
 and define $\phi=\delta_{a,c}$. We've already seen that in this
case we must have
\[
\T\phi=\delta_{b,d},
\]
 where $\ell_{1}^{\prime}(b)=d=\ell_{2}^{\prime}(b)$. Since $\ell_{1}^{\prime},\ell_{2}^{\prime},\ell_{3}^{\prime}$
are collinear we get from Lemma \ref{lem:collinearity} that $\ell_{3}^{\prime}(b)=d$
as well. This implies that $\delta_{b,d}$ is tangent to $\ell_{3}^{\prime}$,
so $\delta_{a,c}$ is tangent to $\ell_{3}$ and $\ell_{3}(a)=c$.
Again by Lemma \ref{lem:collinearity} we get that $\ell_{1},\ell_{2},\ell_{3}$
are collinear like we wanted.

By the fundamental theorem of affine geometry, it now follows that
$F^{-1}$ is affine, so $F$ is affine as well (for an exact formulation
of the fundamental theorem and a sketch of the proof, the reader may
consult \cite{artstein-avidan_concept_2009}). This means that we
can write
\[
\T\left(\left\langle x,a\right\rangle +c\right)=\left\langle x,a\right\rangle +\left\langle a,v\right\rangle +\gamma c
\]
 for some constants $\gamma\in\R^{n}$ and $v\in\R^{n}$ (which are
of course independent of $a$ and $c$). 

We know that $\T\left(\left\langle x,a\right\rangle \right)=\left\langle x,a\right\rangle $,
so $\left\langle a,v\right\rangle =0$ for all $a\in\R^{n}$, which
implies $v=0$. Also, for $\T$ to be order preserving, we must have
$\gamma>0$.

\subsection*{7. Finishing the proof}

We now know that 
\[
\T\left(\left\langle x,a\right\rangle +c\right)=\left\langle x,a\right\rangle +\gamma c
\]
 for some $\gamma>0$. Remember that in the statement of Theorem \ref{thm:char-cvx}
we had one degree of freedom - we don't want to prove that $\T\phi=\phi$,
but that $\left(\T\phi\right)(x)=\frac{1}{C}\phi(Cx)$ for some $C>0$.
We will now use this degree of freedom and assume that $\gamma=1$
(formally, this means we replace $\T$ with $\widetilde{\T}$ defined
by $\left(\widetilde{\T}\phi\right)(x)=\frac{1}{\gamma}\left(\T\phi\right)(\gamma x)$.
We will keep using the notation$\T$ for the new function). 

For every function $\phi\in\cvx$ and any affine $\ell$, we know
that $\ell$ is tangent to $\phi$ if and only if $\T\ell=\ell$ is
tangent to $\T\phi$. In other words, $\phi$ and $\T\phi$ have exactly
the same tangents, so $\T\phi=\phi$ and our proof is finally complete.

\subsection*{8. Proofs of the lemmas}
\begin{proof}
[Proof of Lemma \ref{lem:submonoids}.]First we note that it is enough
to prove that $H$ is either cyclic or dense in $\R^{+}=[0,\infty)$.
Indeed, assume that $H$ is dense in $\R^{+}$. We know that there
exists an element $x<0$ in $H$. If $y\in\R$ is any number, we can
choose $n\in\N$ so large that $y-nx>0$. Now we can find a sequence
$\left\{ x_{k}\right\} \subseteq H$ such that $x_{k}\to y-nx$, and
then $x_{k}+nx\to y$, so $H$ is dense in $\R$. Of course, by a
symmetric argument, it is also enough to prove that $H$ is dense
in $\R^{-}=(-\infty,0]$. 

Now we define 
\begin{eqnarray*}
a & = & \inf\left\{ x\in H:\ x>0\right\} \\
b & = & \sup\left\{ x\in H:\ x<0\right\} .
\end{eqnarray*}

If $a=0$ then there exists a sequence $\left\{ x_{k}\right\} \subseteq H$
such that $x_{k}>0$ for all $k$ and $x_{k}\to0$. But then the set
$\left\{ n\cdot x_{k}:\ n,k\in\N\right\} \subseteq H$ is dense in
$\R^{+}$, so we are done. Similarly, if $b=0$ then $H$ is dense
in $\R^{-}$ and we are done as well. Hence we will assume that $b<0<a$,
and prove that $H$ is cyclic.

Our next goal is to prove that $\left|a\right|=\left|b\right|$. If
not, we may assume without loss of generality that $\left|a\right|>\left|b\right|$,
or, put differently, $a+b>0$. Choose sequences $\left\{ x_{k}\right\} ,\left\{ y_{k}\right\} \subseteq H$
such that $x_{k}\to a$ and $y_{k}\to b$. Then $x_{k}+y_{k}\to a+b$.
Since $0<a+b<a$, for large enough $k$ we have $0<x_{k}+y_{k}<a$,
which is a contradiction to the definition of $a$. Therefore $b=-a$
like we wanted.

Now we prove that $a\in H$. If $a\notin H$, then for every $\epsilon>0$
one can find an element $x\in H$ such that $a<x<a+\epsilon$. In
particular, we can choose $x$ such that $a<x<2a$. Like before, choose
a sequence $\left\{ y_{k}\right\} \subseteq H$ such that $y_{k}\to b=-a$.
Then $x+y_{k}\to x-a$, and in particular for large enough $k$ we
have $0<x+y_{k}<a$. Again, this is a contradiction to the definition
of $a$, so $a\in H$. An identical argument shows that $-a\in H$
as well.

To conclude the proof we finally show that $H=\left\langle a\right\rangle =\left\{ n\cdot a:\ n\in\mathbb{Z}\right\} $.
The fact that $\left\langle a\right\rangle \subseteq H$ is now obvious.
For the other direction, every element $x\in\R^{+}$ can be written
as $x=n\cdot a+y$ for $n\in\N$ and $0\le y<a$. If $x\in H$ then
\[
y=x-n\cdot a=x+n\cdot\left(-a\right)\in H
\]
 as well, so by the minimality of $a$ we must have $y=0$. Therefore
$x=na$, so $x\in\left\langle a\right\rangle $. A similar argument
works in the case $x\in\R^{-}$,and the proof is complete.
\end{proof}

\begin{proof}
[Proof of Lemma \ref{lem:collinearity}.] One of the implications
is simple: If $p_{\ell_{1}},p_{\ell_{2}},p_{\ell_{3}}$ are collinear
then we can write
\[
p_{\ell_{3}}=\lambda p_{\ell_{1}}+(1-\lambda)p_{\ell_{2}}
\]
for some $\lambda\in\R$. This implies that 
\[
\ell_{3}=\lambda\ell_{1}+(1-\lambda)\ell_{2},
\]
 and then $\ell_{1}(x_{0})=\ell_{2}(x_{0})=c$ implies $\ell_{3}(x_{0})=\lambda c+(1-\lambda)c=c$
as well.

The other implication is almost as easy. Write
\[
\ell_{i}\left(x\right)=\left\langle x,a_{i}\right\rangle +c_{i}
\]
 for $i=1,2,3$. Our assumption can be reformulated as saying that
$\left\langle x,a_{1}-a_{2}\right\rangle =c_{2}-c_{1}$ implies $\left\langle x,a_{1}-a_{3}\right\rangle =c_{3}-c_{1}.$
By standard linear algebra, this can only happen if there exists a
$\lambda\in\R$ such that
\begin{eqnarray*}
a_{1}-a_{3} & = & \lambda\left(a_{1}-a_{2}\right)\\
c_{3}-c_{1} & = & \lambda\left(c_{2}-c_{1}\right).
\end{eqnarray*}
 This is equivalent to
\[
\ell_{3}=\left(1-\lambda\right)\ell_{1}+\lambda\ell_{2},
\]
 which implies collinearity of $p_{\ell_{1}},p_{\ell_{2}},p_{\ell_{3}}$
like we wanted.
\end{proof}

\section{\label{sec:Mean-width}Mean width for $\alpha$-concave functions}

We will now begin our discussion of $\alpha$-concave functions (see
Definition \ref{def:a-concave}). For simplicity, we will restrict
ourselves to case $-\infty<\alpha\le0$. For any such $\alpha$, define
\[
C_{\alpha}\left(\R^{n}\right)=\left\{ f:\R^{n}\to[0,\infty):\ f\text{ is }\mbox{\ensuremath{\alpha}}\text{-concave, upper semicontinuous and }f\not\equiv0\right\} .
\]
 For example, we have $C_{0}(\R^{n})=\lc$. As stated in section \ref{sec:support-functions},
we have $C_{\alpha_{1}}(\R^{n})\subseteq C_{\alpha_{2}}(\R^{n})$
whenever $\alpha_{1}\ge\alpha_{2}$.
\begin{rem}
\label{rem:k-concave-measures}In \cite{borell_convex_1975}, Borell
defines not only $\alpha$-concave functions, but also the notion
of a $\kappa$-concave measure. A Radon measure $\mu$ on $\R^{n}$
is $\kappa$-concave if for any non empty Borel sets $A,B$ and any
$0<\lambda<1$ we have 
\[
\mu\left(\lambda A+(1-\lambda)B\right)\ge\left[\lambda\mu(A)^{\kappa}+\left(1-\lambda\right)\mu(B)^{\kappa}\right]^{\frac{1}{\kappa}}.
\]
 Borell then proves that $\alpha$-concave functions and $\kappa$-concave
measures are closely related: Assume $\mu$ is not support on any
hyperplane. Then $\mu$ is $\kappa$-concave if and only if $\kappa\le\frac{1}{n}$,
$\mu$ is absolutely continuous with respect to the Lebesgue measure,
and the density $f=\frac{d\mu}{dx}$ is $\alpha$-concave, for $\alpha=\frac{\kappa}{1-n\kappa}$. 

Notice that for such a density $f=\frac{d\mu}{dx}$, we must have
$\alpha\ge-\frac{1}{n}$, so some authors only discuss $\alpha$-concave
functions for such values of $\alpha$. We will need the assumption
$\alpha\ge-\frac{1}{n}$ for some theorems, but other results will
hold in full generality.
\end{rem}
Since we only care about negative values of $\alpha$, it will often
be more convenient, and less confusing, to use the parameter $\beta=-\frac{1}{\alpha}$.
For example, we will use the new notation in the following definition:
\begin{defn}
The convex base of the a function $f\in\Ca$ is 
\[
\base(f)=\frac{1-f^{\alpha}}{\alpha}.
\]
 Put differently, $\phi=\base(f)$ is the unique convex function such
that 
\[
f=\left(1+\frac{\phi}{\beta}\right)^{-\beta}.
\]

\end{defn}
The above definition is inspired by the work of Bobkov in \cite{bobkov_convex_2010}.
While the definition might seem unintuitive at first, it has a couple
of appealing features :
\begin{itemize}
\item In the limiting case $\alpha\to0$ ($\beta\to\infty$), we get the
relation $\text{base}_{0}f=-\log f$. This is the standard and often
used bijective, order reversing map between $\lc$ and $\cvx$.
\item If $f=\o_{K}$ is an indicator function, then 
\[
\base f=\o_{K}^{\infty}=\begin{cases}
0 & x\in K\\
\infty & \text{otherwise}
\end{cases}
\]
is the well known ``convex indicator function'' of $K$. In particular,
$\base f$ is independent of $\alpha$ in that case.
\end{itemize}
Notice, however, that unlike the log-concave case, the map $\base:\Ca\to\cvx$
is not surjective, as we always have $\base f>-\beta$. 

If we are willing to treat $\base f$ as the proper generalization
of $\left(-\log f\right)$ to the $\alpha$-concave case, a few important
definitions emerge immediately:
\begin{defn}

\begin{enumerate}
\item The support function of a function $f\in\Ca$ is 
\[
h_{f}^{\left(\alpha\right)}=\left(\base f\right)^{\ast}\in\cvx.
\]

\item The sum of two functions $f,g\in\Ca$ is defined by
\[
\base\left(f\star_{\alpha}g\right)=\left(\base f\right)\square\left(\base g\right),
\]
assuming the right hand side is a indeed a convex base for an $\alpha$-concave
function (see the discussion above Proposition \ref{pro:conv-repr}).
Here $\square:\cvx\times\cvx\to\cvx$ is the standard inf-convolution,
defined by 
\[
\left(\phi\square\psi\right)(x)=\inf_{y+z=x}\left[\phi(y)+\psi(z)\right].
\]

\item If $f\in\Ca$ and $\lambda>0$, the $\lambda$-homothety of $f$ defined
by 
\[
\left[\base\left(\lambda\cdot_{\alpha}f\right)\right](x)=\lambda\cdot\left(\base f\right)\left(\frac{x}{\lambda}\right).
\]

\end{enumerate}
When there is no cause for confusion, we will omit the script and
write $h_{f}$, $f\star g$, and $\lambda\cdot f$. 
\end{defn}
The above definitions were constructed to interact well with one another.
We have for example 
\[
h_{\left(\lambda\cdot_{\alpha}f\right)\star_{\alpha}g}^{\left(\alpha\right)}=\lambda h_{f}^{\left(\alpha\right)}+h_{g}^{\left(\alpha\right)},
\]
as well as $f\star_{\alpha}f=2\cdot_{\alpha}f$ and other similar
equalities. However, one should be aware of two important caveats.

The first thing to observe is that the above definitions really depend
on $\alpha$. We know that if $f\in C_{\alpha_{1}}(\R^{n})$, then
$f\in C_{\alpha_{2}}(\R^{n})$ for every $\alpha_{2}<\alpha_{1}$.
Nonetheless, we usually have $h_{f}^{\left(\alpha_{1}\right)}\ne h_{f}^{\left(\alpha_{2}\right)}$,
and similarly for additions and homotheties. An important exception
to this rule is the case of indicator functions. If $f=\o_{K}$ and
$g=\o_{T}$, then $h_{f}^{\left(\alpha\right)}=h_{K}$, $f\star_{\alpha}g=\o_{K+T}$
and $\lambda\cdot_{\alpha}f=\o_{\lambda K}$, for all values of $\alpha$. 

The second, technical, caveat is that additions and homotheties are
not always defined. If, for example, $\base f=\base g=-\frac{3\beta}{4}$,
then 
\[
\base\left(f\star_{\alpha}g\right)=\left(\base f\right)\square\left(\base g\right)=-\frac{3\beta}{2}.
\]
But this is impossible, since for every $h\in\Ca$ we have $\base h\ge-\beta$.
Addition is defined, however, under some mild conditions on $f$ and
$g$ (for example it is enough to assume $f\le1$). A particularly
nice case is the case of convex combinations, where we have the following
simple formula:
\begin{prop}
\label{pro:conv-repr}Fix $f,g\in\Ca$ and choose $0<\lambda<1$.
Define 
\[
h=\left[\lambda\cdot f\right]\star\left[\left(1-\lambda\right)\cdot g\right].
\]
 Then 
\[
h(x)=\sup_{y+z=x}\left[\lambda f\left(\frac{y}{\lambda}\right)^{\alpha}+(1-\lambda)\cdot g\left(\frac{z}{1-\lambda}\right)^{\alpha}\right]^{\frac{1}{\alpha}}
\]
\end{prop}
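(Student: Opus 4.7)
The plan is to unwind the chain of definitions that relates $h$ to $\base h$, then $\base h$ to the convex bases $\base f$ and $\base g$, and finally back to $f$ and $g$ themselves, inverting the last step at the end. Writing
\[ A(y,z) = \lambda f(y/\lambda)^{\alpha} + (1-\lambda)\, g(z/(1-\lambda))^{\alpha} \]
for the quantity that appears inside the supremum, the target identity becomes $h(x) = \sup_{y+z=x} A(y,z)^{1/\alpha}$. Since the operations $\star_{\alpha}$, $\cdot_{\alpha}$ and the map $\base$ have been arranged to interact linearly with one another on the level of convex bases, the computation is essentially bookkeeping; the only real subtlety is sign tracking, because $\alpha < 0$ causes infima and suprema to swap places at two different moments.

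First I unfold $\base h$ using the definitions of the $\alpha$-homothety and of $\star_{\alpha}$:
\[ (\base h)(x) = \inf_{y+z=x}\Bigl[\lambda (\base f)(y/\lambda) + (1-\lambda)(\base g)(z/(1-\lambda))\Bigr]. \]
Substituting the explicit formula $\base f = (1 - f^{\alpha})/\alpha$, and the analogous one for $g$, makes the bracket collapse to $(1 - A(y,z))/\alpha$, so $(\base h)(x) = \inf_{y+z=x} (1 - A(y,z))/\alpha$. I then recover $h$ from its base via $h = (1 - \alpha \cdot \base h)^{1/\alpha}$. Multiplying the infimum by the negative constant $\alpha$ turns it into a supremum, and after the constants cancel one is left with $1 - \alpha (\base h)(x) = \inf_{y+z=x} A(y,z)$. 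Finally, the map $t \mapsto t^{1/\alpha}$ is strictly decreasing on $(0,\infty]$ because $1/\alpha < 0$, so raising to this power swaps the infimum for a supremum and gives $h(x) = \sup_{y+z=x} A(y,z)^{1/\alpha}$, which is exactly the claim.

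The only point meriting a sanity check is well-definedness. Since $f,g$ take values in $[0,\infty)$ and $\alpha < 0$, we have $A(y,z) \in (0,\infty]$ and $A(y,z)^{1/\alpha} \in [0,\infty)$, with the convention $(+\infty)^{1/\alpha} = 0$ correctly handling any pair $(y,z)$ at which $f$ or $g$ vanishes. The proposition's implicit hypothesis that $[\lambda \cdot f] \star [(1-\lambda) \cdot g]$ lies in $\Ca$ is automatic for convex combinations: any $f,g \in \Ca$ satisfy $\base f, \base g \geq -\beta$ everywhere, hence $\lambda (\base f)(y/\lambda) + (1-\lambda)(\base g)(z/(1-\lambda)) \geq -\beta$ pointwise, and therefore $\base h \geq -\beta$ as well, so no degenerate case arises.
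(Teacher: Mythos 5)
Your argument is correct and follows essentially the same route as the paper: unfolding the definitions of $\star_{\alpha}$, $\cdot_{\alpha}$, and $\base$, then tracking the sign flips caused by $\alpha<0$ (multiplying an infimum by $\alpha$, and raising to the power $1/\alpha$, each swap $\inf$ and $\sup$). The paper compresses the bookkeeping slightly by computing $h(x)^{\alpha}=1+(\base h)(x)/\beta$ directly and only flipping to a supremum at the very last step, whereas you work on $\base h$ first and convert afterwards; your closing check that convex combinations are automatically well-defined (since $\base f,\base g\ge-\beta$ forces $\base h\ge-\beta$) is a welcome addition that makes explicit a point the paper relegates to the surrounding discussion.
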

\begin{proof}
This is nothing more than an explicit calculation. Denote $\phi=\ker_{\alpha}f$
and $\psi=\ker_{\alpha}g$. Then: 
\begin{eqnarray*}
h(x)^{\alpha} & = & \left(1+\frac{\ker_{\alpha}h}{\beta}\right)^{-\beta\alpha}\\
 & = & 1+\inf_{y+z=x}\frac{\lambda\phi\left(\frac{y}{\lambda}\right)+(1-\lambda)\psi\left(\frac{z}{1-\lambda}\right)}{\beta}\\
 & = & \inf_{y+z=x}\left[\lambda\cdot\left(1+\frac{\phi\left(\frac{y}{\lambda}\right)}{\beta}\right)+\left(1-\lambda\right)\left(1+\frac{\psi\left(\frac{z}{1-\lambda}\right)}{\beta}\right)\right]\\
 & = & \inf_{y+z=x}\left[\lambda f\left(\frac{y}{\lambda}\right)^{\alpha}+(1-\lambda)\cdot g\left(\frac{z}{1-\lambda}\right)^{\alpha}\right],
\end{eqnarray*}
 and raising both sides to power $\frac{1}{\alpha}$ we get the result.
\end{proof}
Proposition \ref{pro:conv-repr} is especially useful when combined
with a known inequality, discovered independently by Borell, (\cite{borell_convex_1975})
and Brascamp and Lieb (\cite{brascamp_extensions_1976}): 
\begin{thm*}
[Borell-Brascamp-Lieb]\label{thm:BBL}Assume we are given measurable
functions $f,g,h:\R^{n}\to[0,\infty]$ and numbers $0<\lambda<1$,
$\alpha\ge-\frac{1}{n}$ such that 
\[
h\left(\lambda x+(1-\lambda)y\right)\ge\left[\lambda f(x)^{\alpha}+\left(1-\lambda\right)g(y)^{\alpha}\right]^{\frac{1}{\alpha}}
\]
 whenever $f(x),g(y)>0$. Then
\[
\int h\ge\left[\lambda\left(\int f\right)^{\kappa}+\left(1-\lambda\right)\left(\int g\right)^{\kappa}\right]^{\frac{1}{\kappa}},
\]
 where $\kappa=\frac{\alpha}{1+n\alpha}$. 
\end{thm*}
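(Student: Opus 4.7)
My plan follows the classical two-step route of Borell and of Brascamp--Lieb: establish the $n=1$ case by an explicit change of variables, then induct on the dimension using Fubini. I would begin by disposing of degenerate cases. If $\int f = 0$ or $\int g = 0$ the right-hand side is $0$ (or a one-sided limit) and the inequality is trivial; if one of the integrals is $+\infty$, the hypothesis forces $\int h = +\infty$ by a standard truncation. So I may assume $A := \int f$ and $B := \int g$ lie in $(0,\infty)$.

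For $n=1$ I would use the quantile trick: define non-decreasing absolutely continuous maps $u,v:[0,1]\to\R$ by
\begin{align*}
\int_{-\infty}^{u(s)} f(t)\,dt = sA, \qquad \int_{-\infty}^{v(s)} g(t)\,dt = sB,
\end{align*}
so that $u'(s) = A/f(u(s))$ and $v'(s) = B/g(v(s))$ a.e. on the support. Set $w(s) = \lambda u(s) + (1-\lambda) v(s)$. A change of variables gives $\int h \geq \int_0^1 h(w(s))\, w'(s)\, ds$, and the hypothesis yields
\begin{align*}
h(w(s))\, w'(s) \geq \bigl[\lambda f(u(s))^{\alpha} + (1-\lambda) g(v(s))^{\alpha}\bigr]^{1/\alpha} \Bigl[\lambda \tfrac{A}{f(u(s))} + (1-\lambda)\tfrac{B}{g(v(s))}\Bigr].
\end{align*}
The right-hand side is pointwise bounded below by $\bigl[\lambda A^{\alpha/(1+\alpha)} + (1-\lambda) B^{\alpha/(1+\alpha)}\bigr]^{(1+\alpha)/\alpha}$ via a two-point H\"older-type mean inequality: the special case $p=\alpha$, $q=1$ of $M_p\cdot M_q \geq M_r$ with $1/r = 1/p + 1/q$. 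Integrating gives the $1$-dimensional result with exponent $\alpha_1 = \alpha/(1+\alpha)$.

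For the inductive step, assume the theorem in $\R^{n-1}$ with exponent $\kappa_{n-1} = \alpha/(1+(n-1)\alpha)$. Writing points of $\R^n$ as $(x',t)$, for each $s,t\in\R$ and $r = \lambda s + (1-\lambda) t$ the slices $f(\cdot,s)$, $g(\cdot,t)$, $h(\cdot,r)$ satisfy the original $\alpha$-concavity hypothesis in $\R^{n-1}$. The inductive assumption yields
\begin{align*}
H(r) \geq \bigl[\lambda F(s)^{\kappa_{n-1}} + (1-\lambda) G(t)^{\kappa_{n-1}}\bigr]^{1/\kappa_{n-1}},
\end{align*}
where $F,G,H$ are the $(n-1)$-dimensional marginals. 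Applying the $1$-dimensional case to $F,G,H$ with exponent $\kappa_{n-1}$ produces output exponent
\begin{align*}
\frac{\kappa_{n-1}}{1+\kappa_{n-1}} = \frac{\alpha}{1+n\alpha} = \kappa,
\end{align*}
closing the induction. The hypothesis $\alpha \geq -1/n$ is exactly what keeps $1+k\alpha > 0$ for every $k \leq n$, so each intermediate exponent $\kappa_k$ stays in the range $(-1,\infty]$ where the $1$-dimensional step is valid.

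I expect the main obstacle to be the two-point mean inequality used at the end of the $1$-dimensional step. It is routine for $\alpha > 0$ (H\"older) and for $\alpha = 0$ (AM--GM), but the negative range $-1/n \leq \alpha < 0$, together with the boundary cases where $\alpha$ or $\kappa$ equals $-\infty$ (so $M_\alpha$ or $M_\kappa$ degenerates to $\min$), requires either case-by-case algebra or, more cleanly, an approximation argument taking $\alpha_j \downarrow -1/n$ with monotone convergence on both sides. A secondary technical point is the careful justification of the change of variables on the set where $f$ or $g$ vanishes, needed to respect the ``$f(x),g(y)>0$'' proviso in the hypothesis; this is standard once $u,v$ are restricted to the preimages of the supports.
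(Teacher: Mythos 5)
The paper does not give a proof of this theorem: it is quoted as a known result with citations to Borell and to Brascamp and Lieb, so there is no internal argument to compare against. On its own terms, your sketch is the standard two-step proof: the quantile (monotone transport) map for $n=1$ followed by the Fubini induction on dimension. The exponent bookkeeping is right -- $\kappa_{n-1}/(1+\kappa_{n-1})=\alpha/(1+n\alpha)$ -- and so is the observation that $\alpha\ge-\tfrac1n$ is precisely what keeps every intermediate parameter $\kappa_k$ in the range $\ge -1$ where the one-dimensional step applies. The two-point mean inequality you invoke, $M_p(a)\,M_q(b)\ge M_r(ab)$ with $\tfrac1r=\tfrac1p+\tfrac1q$, valid for $p+q\ge0$, is indeed the crux and is a known lemma (it appears, e.g., as Lemma~10.1 in Gardner's Brunn--Minkowski survey); your instantiation $p=\alpha$, $q=1$, with $a=(f(u),g(v))$ and $b=(A/f(u),B/g(v))$ so that $ab=(A,B)$, is exactly what is needed. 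One corner is stated a bit too casually: when $\kappa>0$ (i.e.\ $\alpha>0$) and $f\equiv0$, the right-hand side $\bigl[(1-\lambda)(\int g)^\kappa\bigr]^{1/\kappa}$ does not vanish, so the case $\int f=0$ is not ``trivial'' but must be excluded by assuming $f,g\not\equiv0$; the ``one-sided limit'' hedge should be replaced by that convention. With that caveat, and granting the technical points you already flag (absolute continuity of $u,v,w$, the boundary case $\kappa=-\infty$ handled by monotone approximation), the proposal is a faithful outline of the Brascamp--Lieb proof.
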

The important of the parameter $\kappa$ was explained in Remark \ref{rem:k-concave-measures}.
Notice that when $\alpha=\infty$ we get $\kappa=\frac{1}{n}$ and
the theorem reduces to the Brunn-Minkowski theorem. When $\alpha=0$
we get that $\kappa=0$ as well and the theorem reduces to the a special
case known as the Prékopa\textendash{}Leindler inequality. 

From Proposition \ref{pro:conv-repr} and Theorem \ref{thm:BBL} we
immediately get: 
\begin{cor}
\label{cor:BBL-modified}If $f,g\in\Ca$ and $\alpha\ge-\frac{1}{n}$,
then 
\[
\int\left[\lambda\cdot f\right]\star\left[\left(1-\lambda\right)\cdot g\right]\ge\left[\lambda\left(\int f\right)^{\kappa}+\left(1-\lambda\right)\left(\int g\right)^{\kappa}\right]^{\frac{1}{\kappa}}.
\]

\end{cor}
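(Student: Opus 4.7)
The plan is to apply Proposition \ref{pro:conv-repr} to rewrite the integrand in exactly the form required by the Borell--Brascamp--Lieb theorem, and then invoke BBL directly. There is no essential work to be done beyond a change of variables matching the two formulations.

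First, I would set $h = [\lambda\cdot f]\star[(1-\lambda)\cdot g]$, so that Proposition \ref{pro:conv-repr} produces the explicit representation
\[
h(x) = \sup_{y+z=x}\left[\lambda f\!\left(\frac{y}{\lambda}\right)^{\alpha} + (1-\lambda)\, g\!\left(\frac{z}{1-\lambda}\right)^{\alpha}\right]^{\frac{1}{\alpha}}.
\]
The point of this representation is that it already has the ``convex combination of $\alpha$-powers'' shape that appears in BBL. The only mismatch is cosmetic: BBL is stated in terms of $h(\lambda u + (1-\lambda)v)$, whereas our supremum runs over $y+z = x$.

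To bridge this, I would fix arbitrary $u, v\in\R^{n}$ with $f(u), g(v) > 0$ and specialize the supremum by choosing $y = \lambda u$ and $z = (1-\lambda)v$, so $y+z = \lambda u + (1-\lambda)v$. Dropping the supremum to a single pair gives
\[
h\bigl(\lambda u + (1-\lambda)v\bigr) \;\ge\; \left[\lambda f(u)^{\alpha} + (1-\lambda)\,g(v)^{\alpha}\right]^{\frac{1}{\alpha}},
\]
which is precisely the hypothesis of the Borell--Brascamp--Lieb theorem for the triple $(f, g, h)$ with the given $\lambda$ and $\alpha$.

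Finally, since the standing assumption $\alpha \ge -\tfrac{1}{n}$ is exactly the range in which BBL is valid, I would invoke the theorem to conclude
\[
\int h \;\ge\; \left[\lambda\left(\int f\right)^{\kappa} + (1-\lambda)\left(\int g\right)^{\kappa}\right]^{\frac{1}{\kappa}},
\]
with $\kappa = \alpha/(1 + n\alpha)$, which is the desired inequality. There is no real obstacle here: all the heavy lifting is done by Proposition \ref{pro:conv-repr}, whose role is precisely to convert the abstract sum $\star_{\alpha}$ into the concrete ``$\alpha$-average'' form that makes BBL applicable off the shelf. One only needs a brief sanity check that $f(u)>0$ and $g(v)>0$ imply the corresponding arguments in the supremum are active, which is immediate from the substitution.
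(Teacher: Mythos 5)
Your argument is exactly what the paper has in mind: the corollary is stated to follow ``immediately'' from Proposition \ref{pro:conv-repr} and the Borell--Brascamp--Lieb theorem, and your write-up simply spells out the specialization $y=\lambda u$, $z=(1-\lambda)v$ that converts the sup-representation into the pointwise BBL hypothesis. Correct and same route.
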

Our next goal is to define the mean width of an $\alpha$-concave
function. For log-concave functions, the concept of mean width was
originally defined by Klartag and Milman in \cite{klartag_geometry_2005}.
If $f\in\lc$, the Klartag-Milman definition for the mean width of
$f$ is, up to some universal constant, 
\[
w(f)=\lim_{\epsilon\to0^{+}}\frac{\int G\star\left[\epsilon\cdot f\right]-\int G}{\epsilon},
\]
 where $G(x)=e^{-\left|x\right|^{2}/2}$ is the (unnormalized) Gaussian.
Since we are dealing with log-concave functions, $\star$ means $\star_{0}$
in our notation. In \cite{rotem_mean_2012}, the author presented
an equivalent definition, as the average of the support function with
respect to the Gaussian measure:
\[
w(f)=\int_{\R^{n}}h_{f}(x)\cdot G(x)dx.
\]

Both definitions can be extended, mutatis mutandis, to general $\alpha$-concave
functions.
\begin{defn}

\begin{enumerate}
\item For $-\infty<\alpha\le0$ define a function $G_{\alpha}\in\Ca$ by
\[
G_{\alpha}(x)=\left(1+\frac{\left|x\right|^{2}}{2\beta}\right)^{-\beta},
\]
 where, as usual $\beta=-\frac{1}{\alpha}$. In other words, we choose
$G_{\alpha}$ to satisfy $\base G_{\alpha}=\frac{\left|x\right|^{2}}{2}$
.
\item For $f\in\Ca$ we define its $\alpha$- mean width as
\[
w_{\alpha}(f)=\lim_{\epsilon\to0^{+}}\frac{\int G_{\alpha}\star_{\alpha}\left[\epsilon\cdot_{\alpha}f\right]-\int G_{\alpha}}{\epsilon}
\]

\end{enumerate}
\end{defn}
The results of \cite{rotem_mean_2012} can be extended to our case
as well. For example we have the following representation formula:
\begin{thm}
\label{thm:width-repr}For every $f\in\Ca$ we get 
\[
w_{\alpha}(f)=\int h_{f}^{\left(\alpha\right)}(x)\cdot\left(1+\frac{\left|x\right|^{2}}{2\beta}\right)^{-\beta-1}dx
\]
\end{thm}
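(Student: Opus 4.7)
The plan is to compute the $\epsilon$-derivative appearing in the definition of $w_\alpha(f)$ by unfolding $\star_\alpha$ and $\cdot_\alpha$ at the level of convex bases, Taylor-expanding to first order in $\epsilon$, and then interchanging limit and integral.

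Setting $\phi = \base f$, the definitions of homothety and sum give
\[
\base\bigl(G_\alpha \star_\alpha [\epsilon \cdot_\alpha f]\bigr)(x) = \inf_{y+z=x}\left[\frac{|y|^2}{2} + \epsilon\,\phi\!\left(\frac{z}{\epsilon}\right)\right].
\]
The substitution $z = \epsilon w$ and expansion of $|x-\epsilon w|^2$ rewrite this as $\frac{|x|^2}{2} - \epsilon\,\phi^*_\epsilon(x)$, where
\[
\phi^*_\epsilon(x) := \sup_{w\in\R^n}\left[\langle x,w\rangle - \frac{\epsilon|w|^2}{2} - \phi(w)\right].
\]
As $\epsilon\downarrow 0$ the expression inside the supremum increases pointwise in $w$, so $\phi^*_\epsilon(x)\uparrow \phi^*(x) = h_f^{(\alpha)}(x)$ for every $x$.

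Next, using the identity $f = (1 + \base f/\beta)^{-\beta}$, I rewrite
\[
G_\alpha \star_\alpha [\epsilon \cdot_\alpha f](x) = \left(1 + \frac{|x|^2}{2\beta} - \frac{\epsilon\,\phi^*_\epsilon(x)}{\beta}\right)^{-\beta}.
\]
A first-order Taylor expansion of $s\mapsto (A - s/\beta)^{-\beta}$ at $A = 1 + |x|^2/(2\beta)$ yields the pointwise limit
\[
\lim_{\epsilon\to 0^+}\frac{G_\alpha \star_\alpha [\epsilon \cdot_\alpha f](x) - G_\alpha(x)}{\epsilon} = h_f^{(\alpha)}(x)\cdot\left(1 + \frac{|x|^2}{2\beta}\right)^{-\beta-1},
\]
which after integration in $x$ would give the desired representation.

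The main obstacle, as usual for such derivative formulas, is justifying the interchange of the limit with the integral. My plan is to record the exact mean-value identity
\[
\frac{G_\alpha \star_\alpha [\epsilon \cdot_\alpha f](x) - G_\alpha(x)}{\epsilon} = \phi^*_\epsilon(x)\cdot\left(1 + \frac{|x|^2}{2\beta} - \frac{\theta\epsilon\,\phi^*_\epsilon(x)}{\beta}\right)^{-\beta-1}
\]
for some $\theta = \theta(x,\epsilon)\in[0,1]$, and then use the monotone increase $\phi^*_\epsilon\uparrow h_f^{(\alpha)}$ to dominate the difference quotients by a multiple of the limiting integrand, whose integrability is inherited from $f\in\Ca$ through the growth of $h_f^{(\alpha)}$ at infinity. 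Dominated convergence then finishes the argument. This is the $\alpha$-concave analogue of the Gaussian computation carried out in \cite{rotem_mean_2012}, with $G$ replaced throughout by $G_\alpha$.
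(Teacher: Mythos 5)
Your computation is, step for step, the same as the paper's: you compute $\base\bigl(G_\alpha\star_\alpha[\epsilon\cdot_\alpha f]\bigr)(x) = \frac{|x|^2}{2}-\epsilon\,\phi^*_\epsilon(x)$ (the paper writes $\phi^*_\epsilon$ as $(\phi+\epsilon\,|\cdot|^2/2)^*$, which is the same thing), plug it into $f=(1+\base f/\beta)^{-\beta}$, differentiate at $\epsilon=0$, and integrate. The paper, like you, explicitly flags that the interchange of derivative and integral is where the real work lies and refers the reader to \cite{rotem_mean_2012} for the details, so the two proofs are essentially identical in scope.

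Your added monotonicity observation $\phi^*_\epsilon\uparrow\phi^*$ as $\epsilon\downarrow 0$ is correct and is a nice fact, but the domination you sketch from it does not quite close the gap. On the set where $\phi^*_\epsilon(x)>0$, subtracting $\theta\epsilon\,\phi^*_\epsilon(x)/\beta$ from $1+|x|^2/(2\beta)$ makes the base \emph{smaller}, so the factor $(\cdot)^{-\beta-1}$ in your mean-value identity is \emph{larger} than $\bigl(1+|x|^2/(2\beta)\bigr)^{-\beta-1}$, and as $|x|\to\infty$ the discrepancy is not controlled by a fixed constant; the difference quotients therefore do not sit below a fixed multiple of the limiting integrand. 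To make dominated convergence work one has to produce a dominator with $\epsilon$ frozen at a small positive $\epsilon_0$, together with an a priori bound keeping $1+|x|^2/(2\beta)-\epsilon_0\phi^*_{\epsilon_0}(x)/\beta$ bounded away from zero (indeed away from values where $(\cdot)^{-\beta}$ ceases to make sense), and one must separately address the integrability of the limiting integrand, which is not automatic for all $\beta$. These are precisely the technicalities the paper defers; your outline gestures at them but does not resolve them.
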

\begin{proof}
A considerable amount of work is needed in order to write down a completely
formal proof, which applies to all cases. All of the details appear
in \cite{rotem_mean_2012} for the log-concave case, but the same
strategy works just as well for the general $\alpha$-concave case.
Here we give the essence of the proof, and the suspicious reader may
consult \cite{rotem_mean_2012} for the finer details:

Denote $\phi=\base f$. Then
\begin{eqnarray*}
\base\left(G_{\alpha}\star\left(\epsilon\cdot f\right)\right)(x) & = & \inf_{y}\left[\frac{\left|x-y\right|^{2}}{2}+\epsilon\phi\left(\frac{y}{\epsilon}\right)\right]\\
 & = & \frac{\left|x\right|^{2}}{2}+\inf_{y}\left[\frac{\left|y\right|^{2}}{2}-\left\langle x,y\right\rangle +\epsilon\phi\left(\frac{y}{\epsilon}\right)\right]\\
 & \underset{y=\epsilon z}{=} & \frac{\left|x\right|^{2}}{2}+\inf_{z}\left[\frac{\left|\epsilon z\right|^{2}}{2}-\left\langle x,\epsilon z\right\rangle +\epsilon\phi\left(z\right)\right]\\
 & = & \frac{\left|x\right|^{2}}{2}+\epsilon\cdot\inf_{z}\left[\epsilon\frac{\left|z\right|^{2}}{2}-\left\langle x,z\right\rangle +\phi\left(z\right)\right]\\
 & = & \frac{\left|x\right|^{2}}{2}-\epsilon\cdot\sup_{z}\left[\left\langle x,z\right\rangle -\left(\phi\left(z\right)+\epsilon\frac{\left|z\right|^{2}}{2}\right)\right]\\
 & = & \frac{\left|x\right|^{2}}{2}-\epsilon\cdot\left(\phi+\epsilon\frac{\left|x\right|^{2}}{2}\right)^{\ast}(x).
\end{eqnarray*}

Define
\[
H(x,\epsilon)=\frac{\left|x\right|^{2}}{2}-\epsilon\cdot\left(\phi+\epsilon\frac{\left|x\right|^{2}}{2}\right)^{\ast}(x),
\]
 then by the product rule
\begin{eqnarray*}
\left.\frac{dH}{d\epsilon}\right|_{\epsilon=0} & = & -\left.\left(\phi+\epsilon\frac{\left|x\right|^{2}}{2}\right)^{\ast}(x)\right|_{\epsilon=0}-0\cdot\left[\left.\frac{d}{d\epsilon}\right|_{\epsilon=0}\left(\phi+\epsilon\frac{\left|x\right|^{2}}{2}\right)^{\ast}(x)\right]\\
 & = & -\phi^{\ast}(x).
\end{eqnarray*}

Therefore we get
\begin{eqnarray*}
w_{\alpha}(f) & = & \left.\frac{d}{d\epsilon}\right|_{\epsilon=0}\int\left(1+\frac{H(x,\epsilon)}{\beta}\right)^{-\beta}dx=\int\left.\frac{d}{d\epsilon}\right|_{\epsilon=0}\left(1+\frac{H(x,\epsilon)}{\beta}\right)^{-\beta}dx\\
 & = & \int-\beta\left(1+\frac{H(x,0)}{\beta}\right)^{-\beta-1}\cdot\frac{1}{\beta}\cdot\left(-\phi^{\ast}\right)dx\\
 & = & \int\phi^{\ast}(x)\cdot\left(1+\frac{\left|x\right|^{2}}{2\beta}\right)^{-\beta-1}dx
\end{eqnarray*}
 which is exactly what we wanted.
\end{proof}
Our next goal is to prove an Urysohn type inequality for $w_{\alpha}(f)$:
\begin{thm}
\label{pro:urysohn}If $f\in\Ca$ for $\alpha\ge-\frac{1}{n}$ then
\[
w_{\alpha}(f)\ge\int G_{\alpha}\cdot\left[\frac{n}{2}+\frac{1}{\kappa}\left(\frac{\int f}{\int G_{\alpha}}\right)^{\kappa}-\frac{1}{\kappa}\right],
\]
 where $\kappa=\frac{\alpha}{1+n\alpha}$\end{thm}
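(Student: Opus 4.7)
The plan is to find an explicit lower bound on $F(\epsilon):=\int G_{\alpha}\star_{\alpha}[\epsilon\cdot_{\alpha}f]$ and then read off $w_{\alpha}(f)=F'(0^{+})$. The key maneuver is the rescaling identity $G_{\alpha}=(1-\epsilon)\cdot_{\alpha}\bigl(\tfrac{1}{1-\epsilon}\cdot_{\alpha}G_{\alpha}\bigr)$, valid because $\alpha$-homothety is multiplicative ($a\cdot_{\alpha}(b\cdot_{\alpha}f)=(ab)\cdot_{\alpha}f$, as is easily verified from the formula for $\base$). This lets us rewrite
\[
G_{\alpha}\star_{\alpha}[\epsilon\cdot_{\alpha}f]=[\epsilon\cdot_{\alpha}f]\star_{\alpha}\left[(1-\epsilon)\cdot_{\alpha}\bigl(\tfrac{1}{1-\epsilon}\cdot_{\alpha}G_{\alpha}\bigr)\right]
\]
in precisely the format required by Corollary \ref{cor:BBL-modified}. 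Applying that corollary yields
\[
F(\epsilon)\ge\left[\epsilon\left(\int f\right)^{\kappa}+(1-\epsilon)\left(\int\tfrac{1}{1-\epsilon}\cdot_{\alpha}G_{\alpha}\right)^{\kappa}\right]^{1/\kappa}.
\]

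Next I would evaluate the rescaled-Gaussian integral. Because $\base G_{\alpha}(x)=|x|^{2}/2$, we have $\base(\tau\cdot_{\alpha}G_{\alpha})(x)=|x|^{2}/(2\tau)$, so $\tau\cdot_{\alpha}G_{\alpha}$ is merely a $\sqrt{\tau}$-dilation of $G_{\alpha}$, and the substitution $x=\sqrt{\tau}y$ yields $\int\tau\cdot_{\alpha}G_{\alpha}=\tau^{n/2}\int G_{\alpha}$. Plugging in $\tau=1/(1-\epsilon)$,
\[
F(\epsilon)\ge\left[\epsilon\left(\int f\right)^{\kappa}+(1-\epsilon)^{1-n\kappa/2}\left(\int G_{\alpha}\right)^{\kappa}\right]^{1/\kappa}=:\Phi(\epsilon).
\]
Since $F(0^{+})=\int G_{\alpha}=\Phi(0)$, we conclude $w_{\alpha}(f)\ge\Phi'(0)$. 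A short computation, with $A=\int G_{\alpha}$ and $B=\int f$, gives
\[
\Phi'(0)=\frac{1}{\kappa}A^{1-\kappa}\left[B^{\kappa}-\left(1-\tfrac{n\kappa}{2}\right)A^{\kappa}\right]=A\left[\frac{n}{2}+\frac{1}{\kappa}\left(\frac{B}{A}\right)^{\kappa}-\frac{1}{\kappa}\right],
\]
which is exactly the claimed inequality.

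The non-routine step is the rescaling $G_{\alpha}\mapsto(1-\epsilon)\cdot_{\alpha}\bigl(\tfrac{1}{1-\epsilon}\cdot_{\alpha}G_{\alpha}\bigr)$. Had one instead used the naive monotonicity bound $F(\epsilon)\ge\int[(1-\epsilon)\cdot_{\alpha}G_{\alpha}]\star_{\alpha}[\epsilon\cdot_{\alpha}f]$ (free because $(1-\epsilon)\cdot_{\alpha}G_{\alpha}\le G_{\alpha}$) and then applied Corollary \ref{cor:BBL-modified}, one would obtain only $F(\epsilon)\ge[(1-\epsilon)A^{\kappa}+\epsilon B^{\kappa}]^{1/\kappa}$, which differentiates to $\tfrac{A}{\kappa}[(B/A)^{\kappa}-1]$---missing exactly the $\tfrac{n}{2}\int G_{\alpha}$ term, and hence failing at the equality case $f=G_{\alpha}$ (for which $w_{\alpha}(G_{\alpha})=\tfrac{n}{2}\int G_{\alpha}$ by integration by parts in Theorem \ref{thm:width-repr}). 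The rescaling injects the missing ``intrinsic width of $G_{\alpha}$'' contribution through the factor $(1-\epsilon)^{-n\kappa/2}$. The remaining technical points---finiteness of $\int\tau\cdot_{\alpha}G_{\alpha}$, which holds since $\beta\ge n>n/2$ under the assumption $\alpha\ge-1/n$, and the limit computation $F(0^{+})=\int G_{\alpha}$ as $\epsilon\cdot_{\alpha}f$ concentrates at the origin---are routine.
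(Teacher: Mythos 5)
Your proposal is correct and follows essentially the same route as the paper: the identical rescaling $G_{\alpha}=(1-\epsilon)\cdot_{\alpha}\bigl(\tfrac{1}{1-\epsilon}\cdot_{\alpha}G_{\alpha}\bigr)$, the same application of Corollary \ref{cor:BBL-modified}, the same evaluation $\int\tau\cdot_{\alpha}G_{\alpha}=\tau^{n/2}\int G_{\alpha}$ (you via substitution, the paper via a Gamma-function identity), and the same differentiation at $\epsilon=0$. The surrounding commentary on why the rescaling is necessary is accurate but not part of the paper's argument.
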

\begin{proof}
We can write
\[
\int G_{\alpha}\star\left[\epsilon\cdot f\right]=\int\left[\left(1-\epsilon\right)\cdot\left(\frac{1}{1-\epsilon}\cdot G_{\alpha}\right)\right]\star\left[\epsilon\cdot f\right],
\]
 and by Corollary \ref{cor:BBL-modified} we get
\[
\int G_{\alpha}\star\left[\epsilon\cdot f\right]\ge\left[\left(1-\epsilon\right)\left(\int\frac{1}{1-\epsilon}\cdot G_{\alpha}\right)^{\kappa}+\epsilon\left(\int f\right)^{\kappa}\right]^{\frac{1}{\kappa}},
\]
 and the first term in the right hand side can be calculated explicitly:
\begin{eqnarray*}
\int\frac{1}{1-\epsilon}\cdot G_{\alpha} & = & \int\left(1+\frac{\left|x\right|^{2}(1-\epsilon)}{2\beta}\right)^{-\beta}\\
 & = & n\omega_{n}\int_{0}^{\infty}r^{n-1}\left(1+\frac{r^{2}(1-\epsilon)}{2\beta}\right)^{-\beta}dr\\
 & = & \left(\frac{2\pi b}{1-\epsilon}\right)^{\frac{n}{2}}\frac{\Gamma\left(b-\frac{n}{2}\right)}{\Gamma\left(b\right)}=\left(1-\epsilon\right)^{-\frac{n}{2}}\int G_{\alpha}.
\end{eqnarray*}
Define 
\begin{eqnarray*}
A(\epsilon) & = & \int G_{\alpha}\star\left[\epsilon\cdot f\right]\\
B(\epsilon) & = & \left[\left(1-\epsilon\right)\left(\int\frac{1}{1-\epsilon}\cdot G_{\alpha}\right)^{\kappa}+\epsilon\left(\int f\right)^{\kappa}\right]^{\frac{1}{\kappa}}\\
 & = & \left[\left(1-\epsilon\right)^{1-\frac{\kappa n}{2}}\left(\int G_{\alpha}\right)^{\kappa}+\epsilon\left(\int f\right)^{\kappa}\right]^{\frac{1}{\kappa}}.
\end{eqnarray*}
We know that $A(0)=B(0)=\int G_{\alpha}$, and $A(\epsilon)\ge B(\epsilon)$
for every $\epsilon\ge0$. Hence we get
\[
w_{\alpha}(f)=A'(0)\ge B'(0),
\]
 and by direct computation
\begin{eqnarray*}
B'(0) & = & \frac{1}{\kappa}\left[\left(\int G_{\alpha}\right)^{\kappa}\right]^{\frac{1}{\kappa}-1}\cdot\left[-\left(1-\frac{\kappa n}{2}\right)\left(\int G_{\alpha}\right)^{\kappa}+\left(\int f\right)^{\kappa}\right]\\
 & = & \int G_{\alpha}\cdot\left[\frac{n}{2}+\frac{1}{\kappa}\left(\frac{\int f}{\int G_{\alpha}}\right)^{\kappa}-\frac{1}{\kappa}\right]
\end{eqnarray*}
 so we get the result.
\end{proof}
Notice that in the log-concave case $\kappa\to0$, and Theorem \ref{pro:urysohn}
reduces to the inequality 
\[
w_{0}(f)\ge\left(2\pi\right)^{\frac{n}{2}}\left[\frac{n}{2}+\log\left(\frac{\int f}{\int G}\right)\right]
\]
 from \cite{rotem_mean_2012}. 
\begin{rem}
In \cite{colesanti_area_2011}, Colesanti and Fragal\`{a} deal with
expressions of the form 
\[
\lim_{\epsilon\to0^{+}}\frac{\int g\star\left[\epsilon\cdot f\right]-\int g}{\epsilon}
\]
 where $g$ and $f$ are arbitrary log-concave functions. Among other
things, they prove analogs of Theorems \ref{thm:width-repr} and \ref{pro:urysohn},
assuming $f$ and $g$ are log-concave functions satisfying several
technical assumptions. It is not hard to extend their work to our
settings, and obtain results for $f$ and $g$ which are merely $\alpha$-concave.
Since the added difficulties are mostly technical, we will not pursue
the matter any further in this paper.
\end{rem}
Finally, we will demonstrate how one can obtain Poincaré type inequalities
by differentiating Urysohn's inequality. As far as we know this result
never appeared in print, even for the log-concave case.
\begin{thm}
\label{thm:poincare}Fix $\beta>n$. For any smooth function $\psi:\R^{n}\to\R$
which is bounded from below we have 
\begin{eqnarray*}
\int\left|\nabla\psi(x)\right|^{2}\cdot\left(1+\frac{\left|x\right|^{2}}{2\beta}\right)^{-\beta-1}dx & \ge & \frac{\kappa-1}{\int G_{\alpha}}\left[\int\psi(x)\left(1+\frac{\left|x\right|^{2}}{2\beta}\right)^{-\beta-1}dx\right]^{2}\\
 &  & +\frac{\beta+1}{\beta}\cdot\int\psi^{2}(x)\left(1+\frac{\left|x\right|^{2}}{2\beta}\right)^{-\beta-2}dx.
\end{eqnarray*}
 (as usual, $\kappa=\frac{\alpha}{1+n\alpha}=\frac{1}{n-\beta}$).\end{thm}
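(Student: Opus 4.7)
The strategy is to apply Urysohn's inequality (Theorem~\ref{pro:urysohn}) to a one-parameter perturbation of $G_\alpha$ built from $\psi$, and then differentiate twice at $t=0$. Specifically, set $\phi_t(x)=\frac{|x|^2}{2}+t\psi(x)$ and $f_t=(1+\phi_t/\beta)^{-\beta}$, so that $\base f_t=\phi_t$ and $f_0=G_\alpha$. Under sufficient regularity of $\psi$ (smoothness, boundedness from below, and a bound on the Hessian), one has $f_t\in\Ca$ for all $t$ in a neighborhood of $0$. Write
\[
I(t)=w_\alpha(f_t),\qquad R(t)=V\cdot\left[\tfrac{n}{2}+\tfrac{1}{\kappa}\bigl((u_t/V)^{\kappa}-1\bigr)\right],
\]
where $V=\int G_\alpha$ and $u_t=\int f_t$. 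Urysohn's inequality asserts $I(t)\ge R(t)$ in that neighborhood. The crucial observation is that $I(0)=R(0)$ and $I'(0)=R'(0)$ will hold automatically, so that the second-order inequality $I''(0)\ge R''(0)$, extracted from Taylor's theorem applied to $I-R$, will unpack to precisely the claimed Poincar\'e-type inequality.

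To compute $I(t)$ I use the representation of Theorem~\ref{thm:width-repr}, namely $I(t)=\int\phi_t^{\ast}(x)\mu(x)\,dx$ where $\mu(x)=(1+|x|^2/(2\beta))^{-\beta-1}$. The Legendre transform expansion is straightforward via the envelope theorem: writing $\phi_t^{\ast}(y)=\langle x(t,y),y\rangle-\phi_t(x(t,y))$ with critical point determined by $y=x+t\nabla\psi(x)$, one obtains $\frac{d}{dt}\phi_t^{\ast}(y)=-\psi(x(t,y))$, and a second differentiation combined with $\partial_t x|_{t=0}=-\nabla\psi(y)$ gives $\frac{d^2}{dt^2}\phi_t^{\ast}(y)|_{t=0}=|\nabla\psi(y)|^2$. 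Differentiating under the integral then yields
\[
I'(0)=-\int\psi\,\mu\,dx,\qquad I''(0)=\int|\nabla\psi|^2\,\mu\,dx.
\]

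On the $R$ side I differentiate $f_t$ itself: a direct computation gives $\partial_t f_t|_{t=0}=-\psi\,\mu$ and $\partial_t^2 f_t|_{t=0}=\tfrac{\beta+1}{\beta}\psi^2(1+|x|^2/(2\beta))^{-\beta-2}$, so $u_0'=-\int\psi\mu$ and $u_0''=\tfrac{\beta+1}{\beta}\int\psi^2(1+|x|^2/(2\beta))^{-\beta-2}\,dx$. Taylor-expanding $R(t)$ about $u_0=V$ then gives $R'(0)=-\int\psi\mu$ and
\[
R''(0)=\tfrac{\beta+1}{\beta}\int\psi^2(1+|x|^2/(2\beta))^{-\beta-2}\,dx+\tfrac{\kappa-1}{V}\Bigl(\int\psi\mu\,dx\Bigr)^{2}.
\]
Finally, the identity $I(0)=R(0)=\tfrac{n}{2}V$ follows from integrating $\mathrm{div}\bigl(x(1+|x|^{2}/(2\beta))^{-\beta}\bigr)$ over $\R^n$, which produces $\int|x|^2\mu\,dx=nV$; this is where the hypothesis $\beta>n$ is needed for both integrability and to keep $\kappa$ well-defined.

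Assembling the three computations: since $I(0)=R(0)$ and $I'(0)=R'(0)$ both hold, the one-sided inequality $I(t)\ge R(t)$ together with Taylor's theorem forces $I''(0)\ge R''(0)$, and this is exactly the stated inequality. The main obstacles I expect are technical rather than conceptual: (a) making the perturbative expansion of $\phi_t^{\ast}$ uniform enough in $y$ to move the $t$-derivatives inside the integral, and (b) ensuring $f_t$ stays in $\Ca$ on a genuine interval about $0$, which is why the assumption that $\psi$ is bounded below is needed and why one likely first establishes the inequality for $\psi$ with bounded Hessian and then passes to general smooth $\psi$ by approximation.
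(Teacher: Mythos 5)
Your plan is essentially the same as the paper's proof: set $\phi_t = \tfrac{|x|^2}{2}+t\psi$, $f_t=(1+\phi_t/\beta)^{-\beta}$, apply Urysohn's inequality along the path to get $A(t)\geq B(t)$, verify $A(0)=B(0)$ and $A'(0)=B'(0)$, and extract $A''(0)\geq B''(0)$ by Taylor comparison. The first- and second-variation computations of the Legendre transform, the direct differentiation of the $B$-side, and the divergence/moment identity giving $A(0)=B(0)=\tfrac{n}{2}\int G_\alpha$ all match the paper.

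The one place where you leave a genuine gap is item (b), the non-convexity of $\phi_t$. You correctly observe that without a Hessian bound on $\psi$, $\phi_t$ need not be convex for any $t>0$, and you propose to first prove the inequality under a Hessian bound and then pass to general smooth $\psi$ by approximation. The paper avoids this limiting argument entirely with a cleaner device: if $\phi_t$ is not convex, replace it by its convex envelope $\tilde\phi_t$. Since $\tilde\phi_t\leq\phi_t$, the corresponding $\tilde f_t\geq f_t$, so $\int\tilde f_t\geq\int f_t$ and hence $B(t)$ only increases under this replacement; meanwhile $\tilde\phi_t^{\ast}=\phi_t^{\ast}$, so $A(t)$ (given by the Theorem~\ref{thm:width-repr} representation) is unchanged. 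Urysohn then applies to $\tilde f_t\in\Ca$ and still yields $A(t)\geq B(t)$. Your approximation route is not wrong in spirit, but it would require controlling gradients and second-order quantities under the limit, whereas the convex-envelope observation makes the inequality $A(t)\geq B(t)$ hold verbatim for every small $t\geq0$ with no extra hypotheses on $\psi$ beyond the stated ones. You should replace your approximation step with this argument.
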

\begin{proof}
For $t\ge0$ define $\phi_{t}(x)=\frac{\left|x\right|^{2}}{2}+t\cdot\psi(x)$.
Since $\psi$ is bounded from below we know that $\phi_{t}>-\beta$
for small enough $t$. Hence we can define 
\[
f_{t}=\left(1+\frac{\phi_{t}}{\beta}\right)^{-\beta},
\]
 and 
\begin{eqnarray*}
A(t) & = & w_{\alpha}(f_{t})=\int\phi_{t}^{\ast}(x)\cdot\left(1+\frac{\left|x\right|^{2}}{2\beta}\right)^{-\beta-1}dx\\
B(t) & = & \int G_{\alpha}\cdot\left[\frac{n}{2}+\frac{1}{\kappa}\left(\frac{\int f_{t}}{\int G_{\alpha}}\right)^{\kappa}-\frac{1}{\kappa}\right].
\end{eqnarray*}
 We claim that $A(t)\ge B(t)$ for every (small enough) $t\ge0$.
Indeed, if $\phi_{t}$ happens to be convex, $f_{t}$ is $\alpha$-concave
and the claim follows from Theorem \ref{pro:urysohn}. In the general
case, replace $\phi_{t}$ by its convex envelope and notice that $B(t)$
increases, while $A(t)$ stays the same.

By inspecting the proof of \ref{pro:urysohn} or by direct computation,
we see that $A(0)=B(0)$. Let us calculate $A'(0),B'(0)$. 

For $A$, we will use the first variation formula for the Legendre
transform
\[
\dot{\phi}_{t}^{\ast}\left(\nabla\phi_{t}(x)\right)=-\dot{\phi}_{t}(x),
\]
 and by plugging our $\phi_{t}$ and $t=0$ we see that
\[
\dot{\phi}_{0}^{\ast}\left(x\right)=-\dot{\phi}_{0}(x)=-\psi(x),
\]
 so 
\[
A'(0)=-\int\psi(x)\left(1+\frac{\left|x\right|^{2}}{2\beta}\right)^{-\beta-1}dx.
\]
 For $B$ it is easy to differentiate directly: 
\begin{eqnarray*}
B'(t) & = & \int G_{\alpha}\cdot\frac{1}{\kappa}\cdot\kappa\cdot\left(\frac{\int f_{t}}{\int G_{\alpha}}\right)^{\kappa-1}\cdot\frac{1}{\int G_{\alpha}}\cdot\int\dot{f}_{t}\\
 & = & \left(\frac{\int f_{t}}{\int G_{\alpha}}\right)^{\kappa-1}\cdot\int\left[\left(-\beta\right)\left(1+\frac{\phi_{t}}{\beta}\right)^{-\beta-1}\cdot\frac{1}{\beta}\cdot\psi\right]\\
 & =- & \left(\frac{\int f_{t}}{\int G_{\alpha}}\right)^{\kappa-1}\int\psi(x)\left(1+\frac{\phi_{t}(x)}{\beta}\right)^{-\beta-1}dx,
\end{eqnarray*}
 and then for $t=0$ we get
\[
B'(0)=-\int\psi(x)\left(1+\frac{\left|x\right|^{2}}{2\beta}\right)^{-\beta-1}dx.
\]
 Therefore we have $A'(0)=B'(0)$, as was expected. 

It now follows that $A''(0)\ge B''(0)$. In order to calculate $A''(0)$
we will use the second variation formula
\[
\ddot{\phi}_{t}^{\ast}\left(\nabla\phi_{t}(x)\right)+\ddot{\phi}_{t}(x)=\left\langle \left(\text{Hess}\phi_{t}\right)^{-1}\nabla\dot{\phi}_{t}(x),\nabla\dot{\phi}_{t}(x)\right\rangle 
\]
 (for a proof of this formula, see for example \cite{cordero-erausquin_interpolations_2011}).
Plugging in $t=0$ we get
\[
\ddot{\phi}_{0}^{\ast}\left(x\right)+0=\left\langle \text{Id}^{-1}\cdot\nabla\psi(x),\nabla\psi(x)\right\rangle =\left|\nabla\psi(x)\right|^{2},
\]
 and then
\[
A''(0)=\int\left|\nabla\psi(x)\right|^{2}\cdot\left(1+\frac{\left|x\right|^{2}}{2\beta}\right)^{-\beta-1}dx.
\]

For $B$, we again have to differentiate directly and get
\begin{eqnarray*}
B''(t) & = & -(\kappa-1)\left(\frac{\int f_{t}}{\int G_{\alpha}}\right)^{\kappa-2}\cdot\frac{1}{\int G_{\alpha}}\cdot(-1)\cdot\left[\int\psi(x)\left(1+\frac{\phi_{t}(x)}{\beta}\right)^{-\beta-1}dx\right]^{2}\\
 &  & -\left(\frac{\int f_{t}}{\int G_{\alpha}}\right)^{\kappa-1}\int\psi(x)\left(-\beta-1\right)\left(1+\frac{\phi_{t}(x)}{\beta}\right)^{-\beta-2}\frac{1}{\beta}\cdot\psi(x)dx,
\end{eqnarray*}
 or, if we substitute $t=0$, we get
\[
B''(0)=\frac{\kappa-1}{\int G_{\alpha}}\left[\int\psi(x)\left(1+\frac{\left|x\right|^{2}}{2\beta}\right)^{-\beta-1}dx\right]^{2}+\frac{\beta+1}{\beta}\cdot\int\psi^{2}(x)\left(1+\frac{\left|x\right|^{2}}{2\beta}\right)^{-\beta-2}dx,
\]
 and the equality we wanted now follows.\end{proof}
\begin{cor}
For any smooth function $\psi:\R^{n}\to\R$ we have 
\[
\int\left|\nabla\psi(x)\right|^{2}d\gamma_{n}(x)\ge\int\psi^{2}(x)d\gamma_{n}(x)-\left[\int\psi(x)d\gamma_{n}(x)\right]^{2},
\]

where $d\gamma_{n}$ is the standard Gaussian probability measure
on $\R^{n}$.\end{cor}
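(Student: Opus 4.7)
The plan is to obtain the Gaussian Poincar\'e inequality as the log-concave limit $\beta \to \infty$ (equivalently $\alpha \to 0^-$) of Theorem \ref{thm:poincare}. A preliminary reduction removes the one apparent gap: the corollary allows arbitrary smooth $\psi$, while Theorem \ref{thm:poincare} requires $\psi$ bounded from below. But both sides of the target inequality are invariant under adding a constant to $\psi$ (the left-hand side depends only on $\nabla\psi$, and the right-hand side is the variance $\int\psi^2\,d\gamma_n - (\int\psi\,d\gamma_n)^2$), so after truncating and shifting we may freely apply Theorem \ref{thm:poincare} for every $\beta > n$.

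Next I would identify the pointwise limits as $\beta \to \infty$. The two weights satisfy $\left(1+\frac{|x|^2}{2\beta}\right)^{-\beta-1} \to e^{-|x|^2/2}$ and $\left(1+\frac{|x|^2}{2\beta}\right)^{-\beta-2} \to e^{-|x|^2/2}$; the parameter $\kappa = \frac{1}{n-\beta}$ tends to $0^-$, so $\kappa - 1 \to -1$; and $\frac{\beta+1}{\beta} \to 1$. Reusing the Gamma-function computation that already appears in the proof of Theorem \ref{pro:urysohn}, $\int G_\alpha = (2\pi\beta)^{n/2}\,\Gamma(\beta - n/2)/\Gamma(\beta)$, and Stirling's asymptotics give $\Gamma(\beta - n/2)/\Gamma(\beta) \sim \beta^{-n/2}$, whence $\int G_\alpha \to (2\pi)^{n/2}$.

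I would then pass to the limit inside each of the three integrals in Theorem \ref{thm:poincare}. Substituting the limits above, the conclusion becomes
$$\int |\nabla \psi(x)|^2\, e^{-|x|^2/2}\,dx \;\ge\; -\frac{1}{(2\pi)^{n/2}} \left[\int \psi(x)\, e^{-|x|^2/2}\,dx\right]^2 + \int \psi^2(x)\, e^{-|x|^2/2}\,dx,$$
and dividing through by $(2\pi)^{n/2}$ rewrites this precisely as the asserted Gaussian Poincar\'e inequality.

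The main technical obstacle is justifying the interchange of limit and integral for each of the three integrals. For $\psi$ smooth with polynomial growth, this should follow from a dominated-convergence argument: on any ball $|x| \le R$ the convergence of the weights is uniform, and for large $|x|$ the weight $\bigl(1 + |x|^2/(2\beta)\bigr)^{-\beta-1}$ decays like $|x|^{-2\beta-2}$, so once $\beta$ is taken above a fixed threshold one can bound all integrands uniformly by a fixed integrable Gaussian-like majorant. The standard smooth-$\psi$ case then follows by approximation (first truncate, apply the bounded-below case at each $\beta$, take $\beta \to \infty$, then remove the truncation by monotone/dominated convergence against $d\gamma_n$); no new ingredients beyond routine measure-theoretic bookkeeping are required.
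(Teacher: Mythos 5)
Your proposal follows the same route as the paper: take the limit $\beta\to\infty$ in Theorem \ref{thm:poincare}, identify the pointwise limits of the weights, the factor $\kappa-1\to-1$, and $\int G_{\alpha}\to(2\pi)^{n/2}$, then divide by $\int G$ to pass to the Gaussian probability measure. The only difference is cosmetic: the paper disposes of the bounded-below hypothesis by observing the proof works directly at $\beta=\infty$, whereas you propose a truncation/approximation argument to get there, and you spell out the dominated-convergence bookkeeping that the paper leaves implicit.
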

\begin{proof}
This is simply the case $\beta=\infty$ of Theorem \ref{thm:poincare}.
By inspecting the proof of Theorem \ref{thm:poincare} we see that
in the case $\beta=\infty$ we do not need $\psi$ to be bounded from
below.

When $\beta\to\infty$ we have $\kappa\to0$, $\frac{\beta+1}{\beta}\to1$,
and
\[
\left(1+\frac{\left|x\right|^{2}}{2\beta}\right)^{-\beta-1},\left(1+\frac{\left|x\right|^{2}}{2\beta}\right)^{-\beta-2}\to e^{-\frac{\left|x\right|^{2}}{2}}=G(x).
\]
Thus we get 
\[
\int\left|\nabla\psi(x)\right|^{2}G(x)dx\ge\int\psi^{2}(x)G(x)dx-\frac{1}{\int G}\cdot\left[\int\psi(x)G(x)dx\right]^{2},
\]

and if divide both sides by $\int G$ we get exactly what we wanted.
\end{proof}
We see that Theorem \ref{thm:poincare} implies the Gaussian Poincaré
inequality, with a sharp constant. Hence, the case of general $\beta$
may be considered as a ``generalized Poincaré inequality''. At the
moment we are not aware of any applications for this generalized form.

\subsubsection*{Acknowledgment}

I would like to like to express my gratitude to Alexander Segal and
Boaz Slomka for providing some crucial insights for the proof of Theorem
\ref{thm:char-lc}. I would also like to thank my advisor, Prof. Vitali
Milman, for his help and support.

\bibliographystyle{plain}
\bibliography{a-concavity}

\begin{thebibliography}{10}

\bibitem{artstein-avidan_concept_2009}
S.~Artstein-Avidan and V.~Milman.
\newblock The concept of duality in convex analysis, and the characterization
  of the legendre transform.
\newblock {\em Annals of Mathematics. Second Series},
  169(2):661{\textendash}674, 2009.

\bibitem{artstein-avidan_characterization_2010}
S.~Artstein-Avidan and V.~Milman.
\newblock A characterization of the support map.
\newblock {\em Advances in Mathematics}, 223(1):379{\textendash}391, 2010.

\bibitem{avriel_r-convex_1972}
M.~Avriel.
\newblock r-convex functions.
\newblock {\em Mathematical Programming}, 2:309{\textendash}323, 1972.

\bibitem{bobkov_convex_2010}
S.~G. Bobkov.
\newblock Convex bodies and norms associated to convex measures.
\newblock {\em Probability Theory and Related Fields},
  147(1-2):303{\textendash}332, 2010.

\bibitem{borell_convex_1975}
C.~Borell.
\newblock Convex set functions in d-space.
\newblock {\em Periodica Mathematica Hungarica. Journal of the J\'{a}nos Bolyai
  Mathematical Society}, 6(2):111{\textendash}136, 1975.

\bibitem{brascamp_extensions_1976}
H.~J. Brascamp and E.~H. Lieb.
\newblock On extensions of the {{B}runn-{M}inkowski} and
  {{P}r\'{e}kopa-{L}eindler} theorems, including inequalities for log concave
  functions, and with an application to the diffusion equation.
\newblock {\em J. Functional Analysis}, 22(4):366{\textendash}389, 1976.

\bibitem{colesanti_area_2011}
A.~Colesanti and I.~Fragala'.
\newblock The area measure of log-concave functions and related inequalities.
\newblock {\em {arXiv:1112.2555}}, December 2011.

\bibitem{cordero-erausquin_interpolations_2011}
D.~Cordero-Erausquin and B.~Klartag.
\newblock Interpolations, convexity and geometric inequalities.
\newblock {\em {arXiv:1109.3652}}, September 2011.

\bibitem{gruber_endomorphisms_1991}
P.~M. Gruber.
\newblock The endomorphisms of the lattice of convex bodies.
\newblock {\em Abhandlungen aus dem Mathematischen Seminar der Universit\"{a}t
  Hamburg}, 61:121{\textendash}130, 1991.

\bibitem{klartag_geometry_2005}
B.~Klartag and V.~D. Milman.
\newblock Geometry of log-concave functions and measures.
\newblock {\em Geometriae Dedicata}, 112:169{\textendash}182, 2005.

\bibitem{rotem_mean_2012}
L.~Rotem.
\newblock On the mean width of log-concave functions.
\newblock In {\em Geometric aspects of functional analysis}, volume 2050 of
  {\em Lecture Notes in Math.} Springer, Berlin, 2012.

\bibitem{schneider_convex_1993}
R.~Schneider.
\newblock {\em Convex bodies: the Brunn-Minkowski theory}, volume~44 of {\em
  Encyclopedia of Mathematics and its Applications}.
\newblock Cambridge University Press, Cambridge, 1993.

\end{thebibliography}

\end{document}